\documentclass[11pt]{article}
\usepackage{dsfont}

\usepackage{psfrag}

\usepackage{calrsfs}
\usepackage{mathrsfs}
\usepackage{pdfsync}
\usepackage{amsmath, amsthm, amssymb, amsfonts,comment}
\usepackage[shortlabels]{enumitem}

\usepackage{url}
\usepackage{float}

\usepackage[usenames]{color}
\usepackage{tikz}

\usepackage[font={small}]{caption}
\usepackage[margin=1cm]{caption}

\usetikzlibrary{arrows,decorations.pathmorphing,backgrounds,positioning,fit,automata}

\usepackage{tikz,fullpage}
\usetikzlibrary{arrows}
\usetikzlibrary{petri}
\usetikzlibrary{topaths}

\usepackage{indentfirst,calc,euscript}
\usepackage{setspace}
\usepackage[reals]{layout}
\usepackage{xr}
\usepackage{amscd}

\usepackage{sgame}
\usepackage{subfigure}

\usepackage[bottom]{footmisc}

\usepackage[colorlinks=true,breaklinks=true,bookmarks=true,urlcolor=blue,
     citecolor=blue,linkcolor=blue,bookmarksopen=false,draft=false]{hyperref}

\newcommand{\1}{\mathbf 1}

\newcommand{\ignore}[1]{}

\renewcommand{\P}{\mathbb P}
\newcommand{\LL}{\mathbb L}
\newcommand*{\vcross}{{\setlength{\fboxsep}{0pt}\fbox{$\vert$}}}

\newtheorem{theorem}{Theorem}

\newtheorem{claim}[theorem]{Claim}

\newtheorem{corollary}[theorem]{Corollary}

\newtheorem{lemma}[theorem]{Lemma}

\newtheorem{proposition}[theorem]{Proposition}

\theoremstyle{definition}

\newtheorem{definition}[theorem]{Definition}

\newtheorem{remark}[theorem]{Remark}

\numberwithin{equation}{section}

\numberwithin{theorem}{section}

\newcommand{\m}{\mathbb}

\renewcommand{\thefootnote}{\fnsymbol{footnote}}

\author{
    Avelio Sepúlveda\footnote{Universidad de Chile, Centro de Modelamiento Matemático (AFB170001), UMI-CNRS 2807, Beauchef 851, Santiago, Chile.}
    \and 
    Bruno Ziliotto\footnote{CEREMADE, CNRS, PSL Research Institute, Paris Dauphine University, France.}
    }
\date{}
\title{The game behind oriented percolation}
\begin{document}
\maketitle
\abstract{
We characterize the critical parameter of oriented percolation on $\m{Z}^2$ through the value of a zero-sum game. Specifically, we define a zero-sum game on a percolation configuration of $\m{Z}^2$, where two players move a token along the non-oriented edges of $\m{Z}^2$, collecting a cost of 1 for each edge that is open, and 0 otherwise. The total cost is given by the limit superior of the average cost. We demonstrate that the value of this game is deterministic and equals 1 if and only if the percolation parameter exceeds $p_c$, the critical exponent of oriented percolation. Additionally, we establish that the value of the game is continuous at $p_c$. Finally, we show that for $p$ close to 0, the value of the game is equal to 0.
}

\renewcommand{\thefootnote}{\arabic{footnote}}
\section{Introduction}
Oriented percolation, introduced by Broadbent and Hammersley \cite{BH57}, is a variation of classic percolation where the edges of the lattice are oriented in a specific direction. This model was initially proposed to describe fluid propagation in a medium, such as electrons moving through an atomic lattice or water permeating a porous solid. Mathematically, oriented percolation is one of the simplest models exhibiting a phase transition and is closely related to the geometric representation of the contact process \cite{H74, H78}.

In this paper, we focus on oriented percolation on the graph $\mathbb{Z}^2$, rotated by $\pi/4$, with edges oriented to always ascend (see Figure \ref{f.realization}). For a fixed $p \in [0,1]$, each oriented edge is independently open with probability $p$. The phase transition of this model is characterized by a critical probability $p_c \in (0,1)$, such that for any $p \leq p_c$, almost surely, there is no infinite directed path of open edges, whereas for $p > p_c$, such a path almost surely exists. Despite extensive research, the exact value of $p_c$ remains unknown. Lower and upper bounds have been established in \cite{BRL06} and \cite{BBS94}, respectively, and heuristic estimates have been computed \cite{WZLGD13}.

\begin{figure}[h!]
    \centering
    \includegraphics[width=0.2\linewidth]{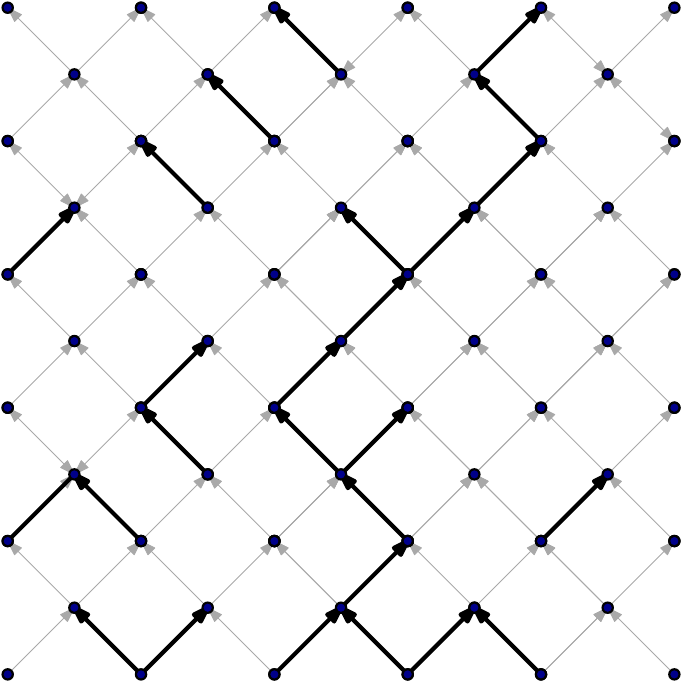}
    \caption{Diagram illustrating oriented percolation: all edges are oriented upwards, black edges are open and gray edges are closed. The image shows an upward crossing of the square.}
    \label{f.realization}
\end{figure}
The objective of this work is to characterize $p_c$ through a multi-stage zero-sum game played on $\mathbb{Z}^2$, where both players have full knowledge of the percolation configuration from the outset. Specifically, we construct a local, non-oriented game between two players, allowing movement in any of the four directions during each turn. We then show that this game experiences a phase transition at the same critical parameter $p_c$ as oriented percolation.

\newpage 

\subsection{Results}
Let us first define the game under study. We begin with a non-oriented percolation configuration on the graph $\LL$, which is the rotation\footnote{For a precise definition of the graph $\LL$, see Section \ref{s.preliminaries}.} by $\pi/4$ of $\m {Z}^2$. A token is placed at some vertex $z$ of $\LL$. At each stage, Player 1 selects either action $T$ (Top) or action $B$ (Bottom) and announces the choice to Player 2. Player 2 then chooses either action $L$ (Left) or action $R$ (Right). The token moves along the corresponding edge\footnote{That is to say, to the Top Right vertex if $(T,R)$ is played, to the Top Left vertex if $(T,L)$ is played, to the Bottom Left vertex if $(B,L)$ is played, and to the Bottom Right vertex if $(B,R)$ is played.}. If the chosen edge is open, Player 1 incurs a cost of 1 paid to Player 2; otherwise, the cost is 0. The game then proceeds to the next stage under the same rules.
Both players are fully aware of the configuration of open and closed edges before the game begins. The total cost is defined as
$\limsup_{n \rightarrow +\infty} \frac{1}{n} \sum_{m=1}^n c_m$, where $c_m$ is the cost at stage $m$. This game has a $\textit{value}$, denoted by $v_p(z) \in [0,1]$. The formal definition of the value is provided in Section \ref{sec:model}. This value can be interpreted as the \textit{solution} of the game in terms of cost, meaning that if both players play optimally, Player 1 should expect to incur a total cost of $v_p(z)$. 

We analyze the value function of this game,
providing a theoretical foundation for what is observed empirically in Figure \ref{f.graph}, and establishing a connection to the percolation properties of directed percolation on $\mathbb Z^2$.
\begin{figure}[!b]  \centering
    \includegraphics[width=0.5\linewidth]{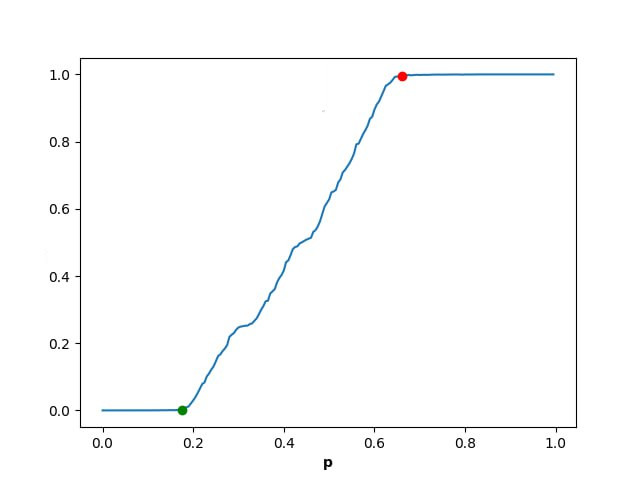}
    \caption{The graph presents simulations of the game value for each $p\in [0,1]$. The green point, $\sim 0.175$  represents the threshold below which the value is always $0$ (see Theorem \ref{t.v=0} for the non-triviality of this point). The red point, $\sim 0.66$, represents the threshold above which the value is always $1$; this point coincides with the critical parameter of oriented percolation in $\mathbb Z^2$ (see Theorem \ref{theo:value_threshold}). Note that both phase transitions appear to be continuous; we show that fact for the red point (see Theorem \ref{theo:continuity}). These simulations were performed by Melissa Garcia González, by playing the game up to 500 steps and 30 times for each $p$, where $p$ takes 200 equally spaced values in $[0,1]$. }
    \label{f.graph}
\end{figure}
The value $v_p(z)$ is, \textit{a priori}, a random variable, as its definition relies on the realizations of the Bernoulli variables that determine which edges are open. Our first result, Theorem \ref{theo:det}, establishes that $v_p(z)$ is actually independent of $z$ and almost surely deterministic. Therefore, the value function reduces to a number in $[0,1]$, which we now denote by $v_p$. Theorem \ref{theo:value_threshold}, our core result, asserts that $v_p=1$ if and only if $p \geq p_c$. This indicates that $p_c$ characterizes a phase transition in the game, where the value becomes 1. Theorem \ref{theo:continuity} demonstrates that the mapping $p \rightarrow v_p$ is continuous at $p_c$. Finally, in Theorem \ref{t.v=0}, we show that there exists $p_0\in (0,1)$ such that for all $p<p_0$, $v_p=0$.

A notable aspect of our result is that the game we define is not ``oriented'': players can move in any of the four directions. Despite this, the phase transition occurs at the same parameter as that of critical oriented percolation, rather than classical percolation. Another significant point is that at $p=p_c$, the value of the game is 1, while for critical oriented percolation, the probability of an infinite open path is 0.

Our results are motivated by both game-theoretic and probabilistic literature. From a game theory perspective, the game we define fits within the class of \textit{percolation games} introduced in \cite{GZ23} and extended in \cite{ALMSZ24}. While \cite{GZ23,ALMSZ24} focus on percolation games with an ``orientation'', our game offers a simple yet rich example of a non-oriented percolation game. It also connects to the broader literature on zero-sum stochastic games with long durations (see, e.g., \cite{sorin02b,LS15,SZ16,solan22,MSZ} for general references). Moreover, in our model, the players know the costs beforehand, which aligns our model with the concept of \textit{random games}, where a game is selected at random, revealed to the players, and then played \cite{alon2021dominancesolvabilityrandomgames, ACSZ21, FPS23, HJMP21}.

From the probabilistic literature perspective, our game generalizes the last-passage percolation (LPP) to a two-player setting, as LPP can be seen as a special case where Player 1 always chooses the top action. Given recent advances in understanding LPP and its connections to the KPZ universality class (see, e.g., \cite{GRS,DV,DOV}), our game presents a natural model to extend some of these findings. Furthermore, our main result resonates with \cite{PSSW}, which relates a game to (non)-oriented percolation. Another relevant work is \cite{HMM19}, where the authors explore a game in which two players move a token on the vertices of $\m{Z}^2$ towards a ``target'' while avoiding ``traps'' placed according to i.i.d. Bernoullis, using this framework to study the determinacy of a class of Probabilistic Finite Automata (see \cite{BKPR23} for an extension).

Finally, our work opens up several avenues for further exploration in both game theory and probability. As a matter of fact, one might consider alternative long-term cost structures or environments and investigate whether our game-theoretic characterization of $p_c$ can help establish new bounds or estimates for $p_c$. Additionally, it would be interesting to explore whether the near-critical exponents associated with our game for $p < p_c$ have any connection to the critical exponents of oriented percolation. More research directions are outlined in the final section of this paper.

\subsection{Overview of the proofs}
The first step in proving our result is to show that $v_p(z)$ does not depend on $z$. This is primarily achieved through game-theoretic arguments, by selecting appropriate sub-optimal strategies and analyzing how the value function varies along the paths generated by these strategies. Next, a classic $0-1$ law argument demonstrates that $v_p$ is deterministic. This last step is more technical than it looks at first glance, as it is not straightforward to show that $v_p(z)$ is measurable with respect to the randomness. 

The proof of our core theorem is more probabilistic. In the supercritical regime $p>p_c$, there exists an infinite oriented path (infinite in both directions) composed solely of open edges (hence, with cost 1). We prove that Player 2 can ensure the token remains within this path, thereby keeping the total cost at 1. 

In the subcritical regime, we prove that Player 1 can guarantee a quantity strictly smaller than one by playing always $T$. Indeed, under such a strategy, the path followed by the token is an oriented path, hence contains a positive density of $0s$, given that $p<p_c$. Surprisingly, to the best of our knowledge, the latter point does not appear in the literature, and we provide a brief proof of it. 

The critical regime $p=p_c$ is more delicate. Indeed, no oriented infinite path of open edges exists in this case. We construct a strategy for Player 2 that navigates between open oriented paths of increasing lengths. Our construction ensures that the time spent by the token moving from one open path to another is negligible compared to the length of the last open path crossed, ensuring the total cost to be 1. This is possible thanks to precise estimates obtained by Duminil-Copin, Tassion and Texeira in \cite{DTT18} regarding the probability of crossings of rectangles in oriented percolation. 

The continuity of the value at $p_c$ is established by constructing a strategy for Player 2 that guarantees a total cost close to 1, provided that $p$ is close to $p_c$. This strategy involves considering boxes of large, fixed size and exploiting the fact that for $p$ near $p_c$, the probability that such a box allows a vertical crossing is close to 1. We demonstrate that Player 1 can navigate between such boxes-those that admit vertical crossings-in a manner that ensures the token spends only a negligible amount of time outside these vertical crossings. The analysis of this strategy once again relies on the estimates in \cite{DTT18}.  
\\
Finally, the fact that $v_p=0$ for $p$ close to 0 is obtained by showing that for such values of $p$, Player 1 can trap the token in an infinite horizontal structure composed entirely of 0s. This structure can be visualized as an infinite horizontal ``thick'' path, where each element corresponds to a square consisting of four edges with a cost of 0.  

The paper is organised as follows. In Section \ref{s.preliminaries}, we present the results from \cite{DTT18}  regarding oriented percolation. In Section \ref{sec:model}, we introduce the game and show that its value is measurable. Section \ref{sec:number} shows that the value is constant. In Section \ref{s.phase_transition}, we establish that the value is 1 if and only if $p$ is larger or equal than $p_c$. In Section \ref{s.continuity}, we show that $v_p$ is continuous at $p_c$. Section \ref{sec:v0} shows that for all $p$ small enough, $v_p=0$. Finally, in Section \ref{sec:persp}, we discuss open problems related to the model.


\section{Preliminaries on oriented percolation}\label{s.preliminaries}
We define the tilted graph
\begin{align*}
	\LL:= \{(x,y) \in \m Z^2: x+y \text{ is even}\},
\end{align*}
where edges exist between points that are exactly a distance of $\sqrt{2}$ apart.

A \textit{path} is a sequence of vertices $(P_i)_{i=\ell_1}^{\ell_2}$, $\ell_1 \in \m{Z} \cup \left\{-\infty\right\}$, $\ell_2 \in \m{Z} \cup \left\{+\infty\right\}$, such that for each $i \in [\ell_1,\ell_2]$, there is an edge between $P_{i}$ and $P_{i+1}$. An \textit{infinite path} corresponds to the case where $\ell_1=-\infty$ and $\ell_2=+\infty$, and a semi-infinite path corresponds to the case where $\ell_1=-\infty$ or $\ell_2=+\infty$. Finally, a \textit{vertical path} is a path $(P_i)_{i=\ell_1}^{\ell_2}$ such that the second coordinate of $P_i$ is monotone.

 We now fix $p\in [0,1]$ and sample a percolation configuration in $\LL$; that is, for each edge  of $\LL$, we assign an i.i.d. Bernoulli random variable with parameter $p$. An edge is said to be closed if its value is $0$, and open if its value is $1$. For $z \in \LL$, we define the event
\begin{align*}
	\vcross_{m,n}(z):= \{z+ (-m,m]\times (-n,n] \text{ is crossed vertically by a vertical open path}\}.
\end{align*}
The border of the symbol $\vcross_{}$ represents a vertical rectangle, and the vertical segment in the middle of the symbol represents the crossing path. Note that for any $z\in \LL$, we have that $\P(\vcross_{m,n}(z))= \P(\vcross_{m,n})$, where $\vcross_{m,n}:=\vcross_{m,n}(0)$. Finally, to simplify notation, when $m$ or $n$ are not integer, we mean $\lfloor m \rfloor$ or $\lfloor n \rfloor$.

As shown in \cite{Dur}, the oriented percolation model undergoes a phase transition in the following sense:
\begin{theorem}\label{t.subcritical} There exists $p_c\in (0,1)$ such that,
\begin{align*}
\begin{cases}
    \P(\vcross_{n,n}) \leq e^{-\gamma_p n} & \text{ if $p<p_c$, for some $\gamma_p >0$}\\
    \P(\vcross_{n, n}) \stackrel{n\to \infty}{\to} 1 & \text{ if } p>p_c.
\end{cases}
\end{align*}
Furthermore, for $p<p_c$, almost surely there are no infinite vertical open paths, and for $p>p_c$, almost surely there exists an infinite vertical open path.
\end{theorem}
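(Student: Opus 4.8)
\emph{The plan.} I would anchor everything to the survival threshold. Write $\{0\to\infty\}$ for the event that the origin is the bottom endpoint of a semi-infinite vertical open path (one with strictly increasing second coordinate, reaching arbitrarily high), put $\theta(p):=\P(0\to\infty)$, and define $p_c:=\sup\{p:\theta(p)=0\}$; this is well defined because $\theta$ is non-decreasing under the usual monotone coupling of the Bernoulli fields. To see $p_c>0$ I would use a first-moment bound: there are at most $2^{k}$ vertical paths of height $k$ starting at the origin, each open with probability $p^{k}$, so the expected number of open ones is at most $(2p)^{k}\to 0$ for $p<1/2$, whence $\theta(p)=0$ and $p_c\ge 1/2$. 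To see $p_c<1$ I would run a Peierls/contour argument: when the forward open cluster of the origin is finite it is surrounded by a dual cutset of closed edges, and for $p$ close to $1$ the union bound over cutsets of each length is summable, so $\theta(p)>0$.

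\emph{Subcritical exponential decay.} I would first reduce crossings to reachability. A vertical open crossing of $(-n,n]\times(-n,n]$ contains a vertical open path of height $2n$ starting at one of the at most $2n$ vertices on the bottom side, so by translation invariance and a union bound $\P(\vcross_{n,n})\le 2n\,\P(0\to \mathrm{height}\ 2n)$, where $\{0\to \mathrm{height}\ m\}$ is the event that an open vertical path from the origin reaches the line $\{y=m\}$. It then suffices to prove $\P(0\to \mathrm{height}\ m)\le e^{-\beta_p m}$ for some $\beta_p>0$ when $p<p_c$, since the polynomial prefactor is absorbed by slightly shrinking the exponent. This exponential decay is exactly the \emph{sharpness} of the phase transition, and I would obtain it by invoking the sharpness machinery (Menshikov, Aizenman--Barsky, or the Duminil-Copin--Tassion argument) adapted to the oriented model: its output is the dichotomy that for each $p$ either $\P(0\to \mathrm{height}\ m)$ decays exponentially or $\theta(p)>0$, which together with the definition of $p_c$ gives $\beta_p>0$ for all $p<p_c$.

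\emph{Supercritical regime and infinite paths.} The easy half of the last assertion is immediate: for $p<p_c$ we have $\theta(p)=0$, hence $\P(v\to\infty)=0$ for every vertex $v$, and a union over the countably many vertices shows that almost surely no semi-infinite---and a fortiori no bi-infinite---vertical open path exists. For $p>p_c$ I would use renormalization: fix a large scale $N$, call a block good if it carries prescribed open crossings linking it to its neighbours, and use the standard supercritical estimates of \cite{Dur} to make $\P(\mathrm{good})$ as close to $1$ as wished. Good blocks dominate a supercritical independent field, a vertical crossing of good blocks forces a vertical open crossing of the large box, and letting $n/N\to\infty$ yields $\P(\vcross_{n,n})\to 1$. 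For the bi-infinite path, let $U$ and $D$ be the (increasing) events that the origin starts a semi-infinite upward, respectively downward, open path; FKG gives $\P(U\cap D)\ge\theta(p)^2>0$, so a positive density of vertices carry both directions, and ergodicity of the i.i.d.\ field forces such a vertex to exist almost surely, producing a bi-infinite vertical open path.

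\emph{Main obstacle.} The only deep step is the sharpness input: that the survival threshold $p_c$ coincides with the threshold for exponential decay of $\P(0\to \mathrm{height}\ m)$, ruling out an intermediate phase with sub-exponential decay. Everything else is a first-moment bound, a contour estimate, FKG, or a renormalization built on top of this; since sharpness is classical for two-dimensional oriented percolation, I would cite \cite{Dur}, noting that the refined crossing estimates of \cite{DTT18} give an alternative route to both halves of the statement.
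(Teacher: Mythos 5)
Your proposal is correct in outline, but it is worth saying that the paper does not actually prove this theorem: its ``proof'' consists entirely of pointers to Sections 3, 6 and 7 of \cite{Dur} (definition of $p_c$, non-triviality, exponential decay, existence of infinite paths). What you have written is a reasonable reconstruction of the arguments behind those citations: the first-moment bound giving $p_c\ge 1/2$, the contour argument giving $p_c<1$, the reduction of $\P(\vcross_{n,n})$ to point-to-height reachability by a union bound over the $O(n)$ bottom vertices, and the FKG-plus-ergodicity argument producing a bi-infinite vertical open path from $\theta(p)^2>0$ are all sound and standard. You also correctly isolate the one genuinely deep input --- that the survival threshold coincides with the threshold for exponential decay of $\P(0\to\text{height }m)$ --- and outsource it; note that in the two-dimensional oriented setting the classical route in \cite{Dur} goes through the right-edge process and its negative speed for $p<p_c$ rather than the general Menshikov/Aizenman--Barsky machinery, though either works. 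One small point deserves the care you implicitly give it: $\theta(p)>0$ alone does not yield $\P(\vcross_{n,n})\to 1$, because a vertical path of height $2n$ started on the bottom of the box can drift horizontally by up to $2n$ and exit through the sides, so the block/renormalization step (or the shape theorem for the supercritical cluster) is genuinely needed there and not just a convenience. Given that the paper treats this as a black box, your sketch is, if anything, more informative than the paper's proof, but for the purposes of this paper a citation is all that is required.
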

\begin{proof}
    The fact that $p_c \in (0,1)$ follows from Sections 3 and 6 of \cite{Dur}.  The exponential decay when $p<p_c$ is from Section 7 of \cite{Dur}. The case when $p>p_c$ comes from the definition of $p_c$ in Section 3 of \cite{Dur}. For the existence of infinite paths, see Section 3 of \cite{Dur}.
\end{proof}
Moreover, a detailed study of the regime $p=p_c$ is presented in \cite{DTT18}.
\begin{theorem}[Theorem 1.2 and 1.3 of \cite{DTT18}] \label{t.connectivity properties}
There exists $c>0$, $\varepsilon>0$ and a sequence $(w_n)_{n\geq 1}$ such that
	\begin{align} \label{e.bound_wn}
		w_n\leq n^{1-\varepsilon} \quad \text{and} \quad \P_{p_c}(\vcross_{w_n,n}) \geq c.
	\end{align}
\end{theorem}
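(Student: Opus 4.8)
This statement is quoted verbatim from \cite{DTT18}, so in the present paper it is used as a black box; let me nonetheless sketch the strategy I would follow to prove it and isolate the genuine difficulty. The polynomial aspect ratio $w_n\le n^{1-\varepsilon}$ reflects the anisotropy of oriented percolation: the horizontal and vertical correlation lengths diverge with different exponents, so that a critical open path climbing a vertical distance of order $n$ typically wanders horizontally only by a sublinear distance of order $n^{1-\varepsilon}$ for some $\varepsilon>0$ (conjecturally $\varepsilon=1-\nu_\perp/\nu_\parallel$). The aim is therefore to show that, at exactly this characteristic shape, the vertical crossing probability stays bounded away from $0$, in contrast to the (polynomial) decay that occurs for square boxes at $p_c$.

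The concrete plan is a second-moment (Paley--Zygmund) argument. I would fix a width $w$ and let $Z$ count the vertices $x$ on the bottom side of $(-w,w]\times(-n,n]$ that are joined, inside the box, to its top side by an open vertical path; then $\vcross_{w,n}=\{Z\ge 1\}$ and $\P_{p_c}(\vcross_{w,n})\ge (\mathbb{E}Z)^2/\mathbb{E}[Z^2]$. Here the first moment is of order $w\,\theta_{2n}$, where $\theta_m:=\P_{p_c}(0\leftrightarrow \text{height }m)$, while the second moment is controlled by two-point connection probabilities summed over pairs of bottom vertices. The width $w\asymp n^{1-\varepsilon}$ is to be chosen as the horizontal scale on which two crossing paths decorrelate, so that $\mathbb{E}[Z^2]\lesssim(\mathbb{E}Z)^2$ and the Paley--Zygmund ratio is bounded below by a constant $c>0$. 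An equivalent route is a Russo--Seymour--Welsh gluing scheme that combines crossings of rectangles of the characteristic shape, via the FKG and BK inequalities, to show that the crossing probability is stable as the scale increases.

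The main obstacle, and the reason the full apparatus of \cite{DTT18} is required, is to obtain the sharp one- and two-point connection estimates at $p_c$ that feed this argument --- equivalently, quantitative two-sided box-crossing (RSW) bounds. Unlike standard planar Bernoulli percolation, oriented percolation enjoys no convenient self-duality: the dual of an open vertical path lives on a shifted oriented lattice, and duality relates a primal crossing to a dual crossing of a complementary, differently-shaped rectangle. Turning this asymmetric duality into two-sided crossing estimates, and controlling the interplay of the two correlation-length scales tightly enough to pin down the polynomial bound $w_n\le n^{1-\varepsilon}$, is the delicate multi-scale analysis carried out in \cite{DTT18}.
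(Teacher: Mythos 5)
You are right that this statement is imported as a black box: the paper's ``proof'' consists of a single sentence citing Theorems 1.2 and 1.3 of \cite{DTT18} together with the comment following Theorem 1.3 there, exactly as you indicate. Your additional second-moment/RSW sketch goes beyond what the paper attempts and is not needed for the paper's purposes, but it does not conflict with anything here.
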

\begin{proof}
	This is a direct consequence of Theorem 1.2 and 1.3 of \cite{DTT18}, along with the comment just below Theorem 1.3 of \cite{DTT18}.
\end{proof}

Theorem \ref{t.connectivity properties} implies the following result. 
\begin{corollary}\label{c.vertical_connections_box}
	For the same constant $c>0$ as in Theorem \ref{t.connectivity properties}, there exist $\sigma<1$ and $\alpha >0$ such that 
	\begin{align*}
		\P_{p_c}(\vcross_{n^\sigma,n})\geq 1-(1-c)^{n^\alpha}\stackrel{n\to \infty}{\longrightarrow} 1.
	\end{align*}
\end{corollary}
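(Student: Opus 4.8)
The plan is to derive the corollary from Theorem~\ref{t.connectivity properties} by a standard ``at least one of many disjoint crossings'' argument: we trade a polynomially growing number of \emph{independent} thin crossings for the exponential decay of the overall failure probability. The key quantitative input is the gap between the width $2w_n \leq 2n^{1-\varepsilon}$ of the thin rectangle in Theorem~\ref{t.connectivity properties} and the full height $2n$, which leaves room to pack many disjoint copies side by side.

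First I would fix an exponent $\sigma$ with $1-\varepsilon < \sigma < 1$; such a choice is possible precisely because $\varepsilon>0$. This makes the box $(-n^\sigma,n^\sigma]\times(-n,n]$ still thin relative to its height (so that crossing it vertically is meaningful), while being wide enough to contain many translates of the thin rectangle from Theorem~\ref{t.connectivity properties}. Inside this box I would place $k:=\lfloor n^\sigma/(w_n+1)\rfloor$ pairwise disjoint horizontal translates $R_1,\dots,R_k$ of the rectangle defining $\vcross_{w_n,n}$, each centered at a vertex of $\LL$ and of full height $2n$; spacing the centers by $2(w_n+1)$ rather than $2w_n$ guarantees both that consecutive copies carry disjoint edge sets and that the even-parity constraint $x+y$ even can be respected, at the cost of only a bounded factor in $k$. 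By translation invariance each event $\{R_j\text{ crossed vertically}\}$ has probability at least $c$ by Theorem~\ref{t.connectivity properties}; since distinct $R_j$ depend on disjoint edges these events are independent; and since each $R_j$ has the same height as the wide box and sits inside it, a vertical crossing of any single $R_j$ is automatically a vertical crossing of $(-n^\sigma,n^\sigma]\times(-n,n]$. Hence
\begin{align*}
\P_{p_c}(\vcross_{n^\sigma,n}) \;\geq\; 1-\prod_{j=1}^{k}\P_{p_c}\bigl(R_j\text{ not crossed vertically}\bigr) \;\geq\; 1-(1-c)^{k}.
\end{align*}

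Finally I would convert the lower bound on $k$ into the stated form. Using $w_n\leq n^{1-\varepsilon}$ gives $k \geq \lfloor n^{\sigma-1+\varepsilon}/2\rfloor$, so that $k\geq n^{\alpha}$ holds for all large $n$ whenever $\alpha\in(0,\sigma-1+\varepsilon)$; since $0<1-c<1$ this yields $(1-c)^{k}\leq (1-c)^{n^{\alpha}}$, and the finitely many small values of $n$ are absorbed by shrinking $\alpha$ or adjusting constants. I do not expect any genuine obstacle here: the argument is entirely of ``one of many independent trials'' type, and the only delicate points are bookkeeping ones, namely checking that the translates can be chosen with vertices in $\LL$ and with disjoint edge sets (so that independence is legitimate) and that a crossing of a thin sub-rectangle really is a crossing of the wide box. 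Both affect the estimate only through bounded constant factors, which are harmlessly absorbed into $\alpha$.
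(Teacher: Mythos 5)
Your proposal is correct and follows essentially the same route as the paper: the paper also fixes an exponent strictly between $1-\varepsilon$ and $1$, packs order $n^{\sigma-1+\varepsilon}$ disjoint horizontal translates of the thin rectangle from Theorem \ref{t.connectivity properties} inside the wide box, and uses independence of the corresponding crossing events to bound the failure probability by $(1-c)$ raised to the number of translates. The only differences are bookkeeping choices (spacing by $3n^{1-\varepsilon}$ in the paper versus $w_n+1$ in your write-up), which do not affect the argument.
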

\begin{proof}
Consider $\varepsilon>0$ as in \eqref{e.bound_wn}. Take $1-\varepsilon<\eta<1$ and note that if there is no vertical crossing in $(-n^\eta,n^\eta]\times (-n,n]$, there should be no vertical crossing in any box $(x,0) + (-n^{1-\varepsilon},n^{1-\varepsilon}] \times (-n,n]$ contained within $(-n^\eta,n^\eta]\times (-n,n]$. In particular, denoting $E_{m,n}(z)$ the complement of the event $\vcross_{m,n}(z)$, we have
	\begin{align*}
		E_{n^\eta,n}\subseteq \bigcap_{k=-\lfloor n^{\eta-1-\varepsilon}/3 \rfloor}^{\lfloor n^{\eta-1-\varepsilon}/3 \rfloor} \left (E_{n^{1-\varepsilon},n}(3k n^{1-\varepsilon},0) \right ).
	\end{align*}
We conclude by noting that the events $(E_{n^{1-\varepsilon},n}((3k n^{1-\varepsilon},0))_{k}$ are independent, since the corresponding boxes are disjoint.
\end{proof}
	
We can adapt the above corollary to the case where $p$ is slightly smaller than $p_c$.
\begin{corollary}\label{c.p_almost_pc}
For  any $\delta>0$, there exists $n_0\in \m N$ such that the following holds: for all $n\geq n_0$, there exists $p<p_c$,
	\begin{align*}
		\P_{p}(\vcross_{n^\sigma,n})>1-\delta,
	\end{align*}
where $\sigma<1$ is as in Corollary \ref{c.vertical_connections_box}. 
\end{corollary}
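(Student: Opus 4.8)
The plan is to leverage the critical estimate from Corollary \ref{c.vertical_connections_box} together with the elementary fact that a vertical crossing of a \emph{finite} box is an event depending on only finitely many edges, so that its probability depends continuously on $p$.

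First I would fix $\delta>0$ and invoke Corollary \ref{c.vertical_connections_box} to select $n_0$ large enough that $(1-c)^{n^\alpha}<\delta/2$ for every $n\geq n_0$; this ensures $\P_{p_c}(\vcross_{n^\sigma,n})>1-\delta/2$ for all such $n$, with $\sigma<1$ and $\alpha>0$ as in that corollary.

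Next I would fix an arbitrary $n\geq n_0$ and observe that the box $(-n^\sigma,n^\sigma]\times(-n,n]$ contains only finitely many edges, so the event $\vcross_{n^\sigma,n}$ is measurable with respect to finitely many i.i.d. Bernoulli$(p)$ variables. Consequently $p\mapsto \P_p(\vcross_{n^\sigma,n})$ is a polynomial in $p$, and in particular continuous; it is moreover nondecreasing in $p$ by monotonicity of crossings, though continuity alone is what I need.

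Finally, continuity at $p_c$ gives $\lim_{p\to p_c^-}\P_p(\vcross_{n^\sigma,n})=\P_{p_c}(\vcross_{n^\sigma,n})>1-\delta/2>1-\delta$, so there exists $p<p_c$ with $\P_p(\vcross_{n^\sigma,n})>1-\delta$, which is exactly the claim. The only point requiring care — and it is a mild one — is justifying this continuity, which rests entirely on the finiteness of the box; crucially, no uniformity in $n$ is required, since the statement allows the chosen $p$ to depend on $n$.
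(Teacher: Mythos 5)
Your proof is correct and follows essentially the same route as the paper: both first use Corollary \ref{c.vertical_connections_box} to obtain $\P_{p_c}(\vcross_{n^\sigma,n})>1-\delta/2$ for all $n\geq n_0$, and then transfer this bound to some $p<p_c$ (depending on $n$) using the fact that the crossing event depends on only finitely many edges. The only, immaterial, difference is how that transfer is justified: you invoke continuity of the polynomial $p\mapsto\P_p(\vcross_{n^\sigma,n})$, while the paper couples $\P_p$ and $\P_{p_c}$ on the finite rectangle so that the two configurations agree with probability at least $1-\delta/2$.
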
	
\begin{proof}
Fixing $\delta>0$, one can find $n_0$ such that for all $n\geq n_0$,
\begin{align*}
	\P_{p_c}\left(\vcross_{n^\sigma,n} \right)>1-\delta/2. 
\end{align*}
We conclude by coupling the measures $\P_p$ and $\P_{p_c}$ restricted to the rectangle $(-n^\sigma,n^\sigma]\times (-n,n]$ in such a way that $\P(\omega^p \neq \omega^{p_c})\leq \delta/2$.
\end{proof}

\section{The game model} \label{sec:model}
Let $p \in [0,1]$. We denote by $E$ the set of edges of $\LL$. We consider a collection $(c(e))_{e\in E}$ of independent and identically distributed Bernoulli random variables with parameter $p$. We define a game where Player 1's action set is $\left\{T,B\right\}$, Player 2's action set is $\left\{L,R\right\}$, and that proceeds as follows: 
\begin{itemize}
\item
A token is placed at some initial point $z$ in $\LL$. 
\item
At each stage, Player 1 selects an action and informs Player 2. Then, Player 2 selects an action, and informs Player 1. If $(T,R)$ is played, the token moves to the upper-right vertex. If $(T,L)$ is played, the token moves to the upper-left vertex. If $(B,R)$ is played, then the token goes to the bottom-right vertex, and if $(B,L)$ is played, the token moves to the bottom-left vertex (see Figure \ref{f.diagram_game}). In each case, Player 1 incurs the cost of the corresponding edge.
\end{itemize}
\begin{figure}[h!]
    \centering
    \includegraphics[width=0.2\linewidth]{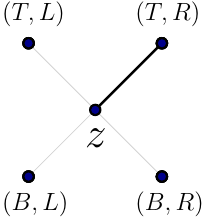}
    \caption{The figure represents a turn of the game. The token is at position $z$; gray edges and black edges represent edges with costs equal to $0$ and $1$, respectively. An edge is crossed if the corresponding pair of actions is chosen, after which the token moves to the other vertex on the edge.}
    \label{f.diagram_game}
\end{figure}

We consider the infinite game where Player 1 aims to minimize costs, and Player 2 aims to maximize them. The \textit{history} at stage $m$ is the sequence of edges $(e_1,\dots,e_{m-1})$ that the token has crossed before stage $m$: this represents Player 1's information at the start of stage $m$. The set of possible histories is $H:=\cup_{m \geq 1} E^{m-1}$. A  strategy for Player 1 is a measurable mapping $\sigma$ that associates an element $\sigma(h) \in \left\{T,B\right\}$ to each possible history $h$. A strategy for Player 2 is a mapping that associates an element of $\tau(h) \in \left\{L,R\right\}^2$ to each possible history $h$, with the following interpretation: the first component of $\tau(h)$ is the action chosen when Player 1 plays $T$, and the second component of $\tau(h)$ is the action chosen when Player 1 plays $B$. The set of strategies for Player 1 is denoted by $\Sigma$, and the set of strategies for Player 2 is denoted by $\mathcal{T}$. 
The total cost induced by a pair of strategies $(\sigma,\tau)$ is defined as
\begin{equation*}
\gamma(z,\sigma,\tau)=\limsup_{n \rightarrow + \infty} \frac{1}{n} \sum_{m=1}^n c(e_m).
\end{equation*}
The game starting from $z$ is denoted by $\Gamma(z)$. 
 A consequence of the work in \cite{MS} is the following proposition.
\begin{proposition}\label{p.measurability}
For each $z \in \LL$, the game $\Gamma(z)$ has a \emph{value}, denoted by $v(z)$:
     \begin{align*}
    \inf_{\sigma \in \Sigma} \sup_{\tau \in \mathcal{T}} \gamma(z,\sigma,\tau)=\sup_{\tau \in \mathcal{T}} \inf_{\sigma \in \Sigma}\gamma(z,\sigma,\tau):=v(z).
     \end{align*}
   Moreover, the function $(c(e))_{e\in E} \mapsto v(z)$ is measurable. 
 \end{proposition}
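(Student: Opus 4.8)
The plan is to regard $\Gamma(z)$, for each fixed realization $\omega=(c(e))_{e\in E}$, as a two-player zero-sum perfect-information game, and then to package the whole family of these games over all environments into a single stochastic game to which the results of \cite{MS} apply. First I would note that a play is the infinite walk $(e_1,e_2,\dots)$ traced on $\LL$, which is determined by the sequence of action pairs alone; in particular the trajectory does not depend on $\omega$, only on the strategies. Since Player 2 observes Player 1's action inside each stage---this is precisely why a strategy records a response $\tau(h)\in\{L,R\}^2$ to each of $T$ and $B$---both players have full monitoring of the past and the one-stage interaction is solvable in pure actions. Thus, for fixed $\omega$, $\Gamma(z)$ is a perfect-information game whose payoff $\limsup_{n}\frac{1}{n}\sum_{m=1}^n c(e_m)$ is a bounded Borel function of the play, and its value exists by (quantitative) Borel determinacy: for each threshold $\lambda$ the set of plays with payoff at least $\lambda$ is Borel, so the associated win-lose game is determined, and optimizing over $\lambda$ gives $\inf_{\sigma}\sup_{\tau}\gamma=\sup_{\tau}\inf_{\sigma}\gamma=:v(z)(\omega)$ for every $\omega$.

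The substantive point is the joint measurability of $\omega\mapsto v(z)(\omega)$, and this is what I would draw from \cite{MS}. I would encode the environment into the state, setting $S=\LL\times\{0,1\}^{E}$, which is standard Borel because $E$ is countable, and consider the stochastic game on $S$ with finite action sets, deterministic (hence Borel) transitions that move the token along the chosen edge while freezing the $\{0,1\}^{E}$-coordinate, and payoff $\limsup_n\frac{1}{n}\sum_m c(e_m)$. To match the ``limsup of a current reward'' form used in \cite{MS}, I would augment the state by the running average $\bar c_n$ and the stage index $n$, so that the payoff becomes $\limsup_n \bar c_n$, a limsup of a Borel function of the (still standard Borel) augmented state. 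The theorem of \cite{MS} on Borel stochastic games with limsup payoff then guarantees that this game has a value $V(s)$ at every initial state $s$ and that $s\mapsto V(s)$ is universally measurable. Because the transitions keep $\omega$ fixed and $\omega$ is part of the observed state, the state-adapted strategies available from $(z,\omega)$ coincide with the history-adapted strategies of $\Gamma(z)$ in environment $\omega$; hence $V(z,\omega)=v(z)(\omega)$, and fixing $z$ shows that $\omega\mapsto v(z)(\omega)$ is universally measurable. This is in particular measurable for the $\P_p$-completion of the product $\sigma$-algebra, which is all that the subsequent $0-1$ law argument will need.

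I expect the measurability to be the only genuine obstacle: the per-environment existence of the value is a direct consequence of Borel determinacy, whereas controlling how the value varies with $\omega$ forces one through the parametrized machinery of \cite{MS}, where the value is produced as the limit of a transfinite, measurability-preserving operator iteration and is therefore only guaranteed to be universally (not Borel) measurable. The two bookkeeping checks are that the state space stays standard Borel after adjoining $\{0,1\}^{E}$, $\bar c_n$ and $n$, and that the identity $V(z,\omega)=v(z)(\omega)$ genuinely holds, i.e. that moving to state-adapted strategies neither helps nor hurts either player relative to the original information structure.
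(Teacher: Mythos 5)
Your proposal follows essentially the same route as the paper: the substantive step in both is to embed the game into the Maitra--Sudderth framework by enlarging the state to carry the (frozen) environment, the token position, the stage index and the running average, so that the payoff becomes a $\limsup$ of a state-dependent reward and Theorem 1.1 of \cite{MS} delivers both the existence of the value and its (universal) measurability in one stroke. Your preliminary paragraph deriving per-environment existence from Borel determinacy of the threshold games is correct but redundant, since the same application of \cite{MS} already yields existence of the value at every state.
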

 \begin{proof}
     Theorem (1.1) of \cite{MS} concerns the measurability of the value function in stochastic games with \emph{$\limsup$ cost}, and our game can be embedded in such a class. Specifically, using the notation from \cite{MS}, we define the following:
     \begin{itemize}
         \item The state space is $X=[0,1]^{E}\times \LL \times \mathbb N^*\times [0,1] $, the action set for Player 1 is $A=\{-1,1\}$, and the action set for Player 2 is $B=\{-1,1\}^2$. The functions $F$ and $G$ are constant and equal to $A$ and $B$, respectively.
         \item The transition function $q:X\times A\times B \mapsto X$ is a (deterministic) function that acts as follows:
         \begin{align*}
         q((\omega, z , n , v),a,b) = \left (\omega, z+e, n+1,  \frac{n }{n+1 } v + \frac{1}{n+1} \omega_{z,z+e} \right ),
         \end{align*}
         where $e$ is $(-1,b_1)$ or $(1,b_2)$ if $a$ is $-1$ or $1$, respectively.
         \item The cost function $u: X \mapsto [0,1]$ is the projection to the last coordinate.
     \end{itemize}

All conditions stated in (1.2) of \cite{MS} are satisfied, and thus Theorem 1.1 of \cite{MS} implies that the value function is measurable.
 \end{proof}

 \begin{remark}
  In this paper, measurability is always with respect to the completed $\sigma$-algebra generated by the random variables $(c(e))_{e\in E}.
     $. However, Theorem 1.1 of \cite{MS} implies that the value function is, in  fact, upper analytic and thus universally measurable. As a result,  Proposition \ref{p.measurability} remains valid for the completed $\sigma$-algebra of any probability measure on $(c(e))_{e\in E}\in [0,1]^E$.
 \end{remark}
We can now define the standard notions of \textit{guarantee} and \textit{$\varepsilon$-optimal strategy}. 
\begin{definition}
Player 1 guarantees $w \in \m{R}$ in $\Gamma(z)$ if there exists $\sigma \in \Sigma$ such that for all $\tau \in \mathcal{T}$, $\gamma(z,\sigma,\tau) \leq w$. We will also say that $\sigma$ \textit{guarantees} $w$. 
\\
Player 2 guarantees $w \in \m{R}$ in $\Gamma(z)$ if there exists $\tau \in \mathcal{T}$ such that for all $\sigma \in \Sigma$, $\gamma(z,\sigma,\tau) \geq w$. We will also say that $\tau$ \textit{guarantees} $w$. 
\end{definition}
\begin{definition}
Let $\varepsilon>0$. A strategy $\sigma \in \Sigma$ is \emph{$\varepsilon$-optimal} in $\Gamma(z)$ if it satisfies that for all $\tau \in \mathcal{T}$, $\gamma(z,\sigma,\tau) \leq v(z)+\varepsilon$. A strategy $\tau \in \mathcal{T}$ is \emph{$\varepsilon$-optimal} in $\Gamma(z)$ if it satisfies that for all $\sigma \in \Sigma$, $\gamma(z,\sigma,\tau) \geq v(z)-\varepsilon$.
\end{definition}
\section{The value is a number} \label{sec:number}
Our first result states that the value does not depend neither on the initial position of the token, nor on the realizations of the costs.

\begin{theorem} \label{theo:det}
The random variable $v(z)$ does not depend on $z$ and is deterministic.
\end{theorem}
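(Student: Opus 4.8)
The plan is to prove the two claims separately: (i) $v(z)$ is almost surely independent of the starting vertex $z$, and (ii) as a random variable $v(z)$ is almost surely constant (deterministic). For (ii) the natural tool is a $0$--$1$ law, but as the authors flag in the overview, the subtlety is that one first needs $v$ to be genuinely measurable with respect to the product $\sigma$-algebra — which is exactly what Proposition~\ref{p.measurability} delivers — and then one must identify the right invariance to apply an ergodicity argument.

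\medskip

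First I would establish position-independence. The key observation is that the long-run average cost $\limsup_n \frac1n\sum_{m=1}^n c(e_m)$ is insensitive to any finite prefix of the play: deleting, inserting, or altering finitely many stages changes the partial sums $\sum c(e_m)$ by a bounded amount and hence does not affect the Cesàro $\limsup$. So if $z$ and $z'$ are any two vertices of $\LL$, pick a finite path $\pi$ joining $z'$ to $z$ (possible since $\LL$ is connected). Starting from $z'$, I let the players first traverse $\pi$ in a forced way — this is a fixed finite sequence of moves reaching $z$ — and then follow an (almost) optimal pair of strategies for $\Gamma(z)$. Because the finite initial segment contributes nothing to the $\limsup$-average, Player~1 can guarantee in $\Gamma(z')$ essentially whatever she can guarantee in $\Gamma(z)$, and symmetrically for Player~2; this yields $v(z')\le v(z)$ and $v(z')\ge v(z)$, whence $v(z')=v(z)$ for the \emph{same} configuration $\omega$. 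Care is needed because the two players announce moves sequentially and Player~2 must be able to force the chosen path; but since traversing a prescribed edge only requires matching the appropriate action pair, a short case analysis shows either player can steer the token along $\pi$, so this step is routine. The upshot is that $\omega \mapsto v(z)$ is a single function $V(\omega)$ not depending on $z$, for every $\omega$.

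\medskip

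For determinism I would invoke a $0$--$1$ law. The cleanest route is to show $V$ is invariant under the natural shift action of $\LL$ (or of a sublattice) on the i.i.d.\ field $(c(e))_{e\in E}$. Fix a translation $\theta$ of $\LL$ by a lattice vector; it induces a measure-preserving map $T_\theta$ on the configuration space $(\{0,1\}^E,\P_p)$, and since the edge weights are i.i.d., this shift is \emph{ergodic} (indeed mixing). The game is translation-covariant: playing in configuration $T_\theta\omega$ from $z$ is identical to playing in $\omega$ from $\theta^{-1}z$, so $V(T_\theta\omega)=v(\theta^{-1}z)(\omega)=V(\omega)$ by the position-independence just proved. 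Thus $V$ is a shift-invariant, measurable function on an ergodic (equivalently, on a tail-trivial i.i.d.) system, so by ergodicity $V$ is $\P_p$-almost surely equal to a constant. I would package this either through the ergodic theorem / ergodicity of product shifts or, equivalently, through Kolmogorov's $0$--$1$ law after checking $V$ is measurable with respect to a trivial sub-$\sigma$-algebra; the former is cleaner here given that $V$ is only shift-invariant rather than tail-measurable a priori.

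\medskip

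\textbf{The main obstacle} is the measurability bookkeeping underlying both steps. Position-independence is visibly robust to finite prefixes, but to convert ``Player~1 can guarantee $w$ from $z$'' into a guarantee from $z'$ one must concatenate strategies across the forced prefix in a measurable way, and the sequential announcement structure (Player~1 moves, then Player~2) means I must verify that each player can \emph{unilaterally} realize the intended deterministic path regardless of the opponent — this requires being slightly careful about who controls the vertical versus horizontal component of each step. For determinism, the delicate point is precisely the one the authors emphasize: $V$ is defined via a $\limsup$ of a double quantifier $\inf_\sigma\sup_\tau$ over uncountable strategy spaces, so its measurability is not obvious and is the reason Proposition~\ref{p.measurability} (via the results of \cite{MS}) is needed before any $0$--$1$ law can even be stated. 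Once measurability and shift-invariance are in hand, the ergodicity of the i.i.d.\ field finishes the argument immediately.
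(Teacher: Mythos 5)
Your second step (measurability via Proposition \ref{p.measurability}, then translation invariance of the event $\{v\le c\}$ and ergodicity of the i.i.d.\ field) is exactly the paper's argument and is fine. The genuine gap is in the first step. You assert that ``either player can steer the token along $\pi$'' for a prescribed finite path $\pi$ from $z'$ to $z$. This is false in this game: Player 1 only chooses the vertical component ($T$ or $B$) of each move and Player 2 only chooses the horizontal component ($L$ or $R$), so neither player can unilaterally force the token to traverse a given edge, let alone reach a given vertex. If Player 1 tries to march the token toward $z$, Player 2 can always answer with the horizontal direction that takes the token away from $z$, and vice versa. Having the two players ``cooperate'' to traverse $\pi$ does not help either, because a guarantee must hold against \emph{all} strategies of the opponent; a cost achieved under a cooperative prefix says nothing about what Player 1 (or Player 2) can guarantee from $z'$. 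So the reduction of $\Gamma(z')$ to $\Gamma(z)$ by a forced finite prefix does not exist, and position-independence is not the routine ``the $\limsup$-average ignores finite prefixes'' fact you describe --- it is the actual content of the theorem.

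The paper circumvents this with a different idea. Fix $\varepsilon>0$. Let $P_T$ (resp.\ $P_B$) be the trajectory from $z$ when Player 1 always plays $T$ (resp.\ $B$) against an $\varepsilon$-optimal strategy of Player 2; by the concatenation Lemma \ref{lemma:shift}, every vertex $\hat z$ on $P_T\cup P_B$ satisfies $v(\hat z)\ge v(z)-2\varepsilon$. Let $P'$ be the trajectory from $z'$ when Player 2 always plays $L$ against an $\varepsilon$-optimal strategy of Player 1; every vertex of $P'$ has value at most $v(z')+2\varepsilon$. If $P'$ meets $P_T\cup P_B$, the two inequalities combine at an intersection point. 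If not, a geometric argument shows that $P'$ eventually uses only one of Player 1's actions and that the relevant path among $P_T,P_B$ uses only finitely many of one of Player 2's actions, so by the law of large numbers both trajectories have asymptotic average cost $p$, giving $v(z)-2\varepsilon\le p\le v(z')+2\varepsilon$. Some argument of this kind, respecting the split control of the two coordinates, is needed to repair your first step.
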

We start with the following technical lemma, which holds for any possible realization of the cost $(c(e))_{e\in E}$.
\begin{lemma} \label{lemma:shift}
Let $z \in \LL$, $(\sigma,\tau)$ be a pair of strategies, $h$ be some history at stage $M \geq 1$, and $z'$ be the vertex reached after history $h$. Then, 
$\gamma(z,\sigma,\tau)=\gamma(z',\sigma[h],\tau[h])$, 
where $\sigma[h] \in \Sigma$ and $\tau[h] \in \mathcal{T}$ are defined for all history $h'$ by $\sigma[h](h'):=\sigma(hh')$ and $\tau[h](h'):=\tau(hh')$. 
\end{lemma}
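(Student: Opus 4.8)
The plan is to prove Lemma~\ref{lemma:shift} by unwinding the definition of the total cost and observing that the $\limsup$ of a Ces\`aro average is insensitive to any finite prefix of the cost sequence. Fix $z\in\LL$, strategies $(\sigma,\tau)$, a history $h$ of length $M-1$ reached at stage $M$, and the vertex $z'$ obtained after following $h$. The key structural observation is that the play generated by $(\sigma,\tau)$ from $z$, restricted to stages $m\geq M$, coincides exactly with the play generated by the shifted strategies $(\sigma[h],\tau[h])$ from $z'$. Indeed, at stage $M+k$ of the original game the players consult histories of the form $hh'$ where $h'$ has length $k$, and by definition $\sigma[h](h')=\sigma(hh')$ and $\tau[h](h')=\tau(hh')$; so the action choices agree, and since the token is at $z'$ after $h$, the token trajectory and the crossed edges match as well. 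Consequently the cost sequences satisfy $c(e_{M+k})=c(e'_{k+1})$ for all $k\geq 0$, where $(e_m)$ is the edge sequence in $\Gamma(z)$ under $(\sigma,\tau)$ and $(e'_m)$ is the edge sequence in $\Gamma(z')$ under $(\sigma[h],\tau[h])$.

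The remaining step is the purely analytic fact that shifting a sequence by a fixed finite number of terms does not change the $\limsup$ of its Ces\`aro averages. Concretely, I would write
\begin{equation*}
\frac{1}{n}\sum_{m=1}^{n} c(e_m)=\frac{1}{n}\sum_{m=1}^{M-1} c(e_m)+\frac{1}{n}\sum_{m=M}^{n} c(e_m),
\end{equation*}
note that the first sum is a fixed constant independent of $n$ (being a finite sum of at most $M-1$ terms, each bounded by $1$), so its contribution vanishes as $n\to\infty$, and reindex the second sum via $k=m-M+1$ to obtain $\frac{1}{n}\sum_{k=1}^{n-M+1} c(e'_k)$. Since $\frac{n-M+1}{n}\to 1$, taking $\limsup_{n\to\infty}$ yields
\begin{equation*}
\gamma(z,\sigma,\tau)=\limsup_{n\to\infty}\frac{1}{n}\sum_{m=1}^{n}c(e_m)=\limsup_{N\to\infty}\frac{1}{N}\sum_{k=1}^{N}c(e'_k)=\gamma(z',\sigma[h],\tau[h]),
\end{equation*}
which is the desired identity.

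I do not expect any serious obstacle here; the lemma is essentially a bookkeeping statement combined with the prefix-invariance of Ces\`aro $\limsup$. The one point requiring mild care is the bijective correspondence between the two plays: I must verify that $\sigma[h]$ and $\tau[h]$ are genuinely well-defined elements of $\Sigma$ and $\mathcal{T}$ (measurability is immediate since they are compositions of the measurable $\sigma,\tau$ with the fixed prepending map $h'\mapsto hh'$), and that the induction on stages correctly identifies each edge $e_{M+k}$ with $e'_{k+1}$. Because the statement is asserted to hold for \emph{every} realization of the costs $(c(e))_{e\in E}$, I would keep the argument entirely deterministic, never invoking any probabilistic structure, so that the identity holds pointwise in the configuration.
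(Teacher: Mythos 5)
Your proposal is correct and follows essentially the same route as the paper: identify the edge sequence of the shifted game with the tail $e'_m=e_{m+M-1}$ of the original one, then use the fact that the $\limsup$ of Ces\`aro averages is unaffected by dropping a fixed finite prefix. The paper states this last step more tersely, but your explicit decomposition and reindexing is just a fuller writing of the same argument.
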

\begin{proof}
Let $(e_m)_{m \geq 1}$ be the sequence of edges induced by $\sigma$ and $\tau$, starting from $z$, and $(e'_m)_{m \geq 1}$ be the sequence of edges induced by $\sigma[h]$ and $\tau[h]$, starting from $z'$. By definition, we have $e'_m=e_{m+M-1}$. Hence,
\begin{eqnarray*}
\gamma(z',\sigma[h],\tau[h])&=&\limsup_{n \rightarrow + \infty} \frac{1}{n} \sum_{m=1}^n c(e'_m)
\\
&=& \limsup_{n \rightarrow + \infty} \frac{1}{n} \sum_{m=1}^n c(e_{m+M-1})
\\
&=& 
\limsup_{n \rightarrow + \infty} \frac{1}{n} \sum_{m=1}^n c(e_{m})
\\
&=& \gamma(z,\sigma,\tau).
\end{eqnarray*}
\end{proof}
With this lemma, we are now ready to prove the main theorem of this section.
\begin{proof}[Proof of Theorem \ref{theo:det}] We begin by showing that $v(z)$ does not depend on $z$. To do this, take $\varepsilon>0$ and $z, z' \in \LL$, without loss of generality, assume $z'$ is to the right of $z$. Let $\widehat{T}$ be the strategy of Player 1 that always play $T$, irrespective of the history. Let $\tau$ be an $\varepsilon$-optimal strategy for Player 2 in the game starting from $z$. Let $P_T$ be the set of vertices that are visited by the token when $(\widehat{T},\tau)$ is played (see Figure \ref{f.independence_z}).

We, first claim that for all $\hat z\in P_T$, $v(\hat z) \geq v(z)-2\varepsilon$. Indeed, let $\hat z\in P_T$, $M$ be the first instant where $\hat z$ is reached under strategies $(\widehat{T},\tau)$, and $h$ be the history at stage $M$. Consider the following strategy $\sigma$ of Player 1: play $T$ until reaching $\hat z$, then play an $\varepsilon$-optimal strategy in the game $\Gamma(\hat z)$\footnote{Recall that $\Gamma(\hat z)$  is the game starting from $\hat z$. }. By Lemma \ref{lemma:shift}, we have 
$\gamma(z,\sigma,\tau)=\gamma(\hat z,\sigma[h],\tau[h])$. Because $\sigma[h]$ is $\varepsilon$-optimal in $\Gamma(\hat z)$, we have $\gamma(\hat z,\sigma[h],\tau[h]) \leq v(\hat z)+\varepsilon$, hence 
$\gamma(z,\sigma,\tau) \leq v(\hat z)+\varepsilon$. Since $\tau$ is $\varepsilon$-optimal in $\Gamma(z)$, we have $\gamma(z,\sigma,\tau) \geq v(z)-\varepsilon$, and we deduce that $v(\hat z) \geq v(z)-2\varepsilon$ for all $\hat z \in P_T$. 

 Analogously, we now construct two other paths: $P_B$, the set of vertices visited by the token when the game starts in $z$, Player 1 always play $B$, and Player 2 plays some $\varepsilon$-optimal strategy; and $P'$, the set of vertices visited by the token when the game starts in $z'$ instead of $z$, Player 2 always plays $L$, and Player 1 plays an $\varepsilon$-optimal strategy (see again Figure \ref{f.independence_z}). The same argument as above shows that for any $\hat z \in P_B$, $v(\hat z) \geq v(z)-2 \varepsilon$ and  $v(\tilde z) \leq v(z')+2 \varepsilon$ for all $\tilde z \in P'$.

 \begin{figure}[h!]
    \centering
    \includegraphics[width=0.25\linewidth]{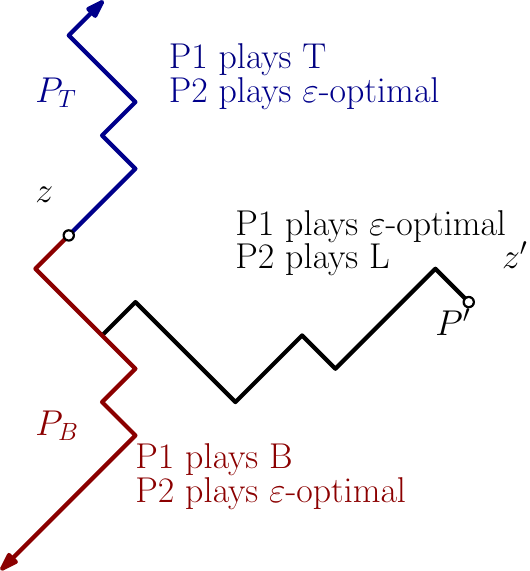}
    \caption{Diagram of the three paths used in the proof. In this case $P'$ intersect $P=P_B\cup P_T$.}
    \label{f.independence_z}
\end{figure}

We now use the inequalities obtained to conclude. First, set $P=P_B \cup P_T$ and consider the case where $\emptyset \neq P\cap P'\ni z_0$. Then, $v(z)-2\varepsilon \leq v(z_0) \leq v(z')+2\varepsilon$, which implies that $v(z)-4\varepsilon<v(z')$. 

We are left with the case where $P \cap P'=\emptyset$. Then, $P'$ contains either a finite number of vertices at which Player 1 plays $B$, or a finite number of vertices at which Player 1 plays $T$. 
 Without loss of generality, assume that we are in the first case. Note that in this case, $P_T$ has to contain only a finite number of $R$. Hence, by the law of large numbers the total cost along $P_T$ and along $P'$ is almost surely $p$. We deduce that 
 $v(z)-\varepsilon \leq p$ and $v(z') + \varepsilon \geq p$, hence $v(z) -2\varepsilon \leq v(z')$. 
 
 As $\varepsilon>0$ is arbitrary, the above arguments show that if $z'$  is on the right side of $z$, then $v(z) \leq v(z')$. By symmetry of the game with respect to the vertical axis, we deduce that for all $z$ and $z'$, $v(z)=v(z')$. 

Once we know that $v$ is independent of $z$, for any $c\in \mathbb R$, the event $\left\{v \leq c\right\}$ must have probability $0$ or $1$ as it is invariant by translation\footnote{This is a consequence of the ergodicity of translations of i.i.d environments, see for example page 38 of \cite{KL} or Lemma 2.8 of \cite{DCLN}.}. Thus, $v$ is almost surely deterministic. 
\end{proof}
\section{Phase transition of the value at $p_c$}\label{s.phase_transition}
In this section, we relate the value of our game with $p_c$, the critical parameter of oriented percolation. This relationship is established in the following theorem.
\begin{theorem} \label{theo:value_threshold}
$v_p=1$ if and only if $p \geq p_c$. 
\end{theorem}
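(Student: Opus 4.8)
The plan is to prove the two implications separately, and to split the forward direction $p \ge p_c \Rightarrow v_p = 1$ according to whether $p > p_c$ or $p = p_c$. Since costs lie in $[0,1]$ we always have $v_p \le 1$, so for the forward direction it suffices to exhibit, for almost every configuration, a strategy of Player 2 guaranteeing total cost $1$ from some vertex: by Theorem \ref{theo:det} the value is independent of the starting vertex and almost surely deterministic, so guaranteeing $1$ from a single (possibly random) vertex forces $v_p = 1$. The reverse direction is proved in its contrapositive form, $p < p_c \Rightarrow v_p < 1$.

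\textbf{Supercritical case.} For $p > p_c$, Theorem \ref{t.subcritical} provides, almost surely, a bi-infinite vertical open path $(P_i)_{i \in \m Z}$, i.e. a doubly infinite, strictly monotone (say upward) sequence of vertices joined by open edges. Placing the token at $P_0$, Player 2 plays the stationary strategy: if Player 1 announces $T$, move to the upper path-neighbour $P_{i+1}$; if Player 1 announces $B$, move to the lower path-neighbour $P_{i-1}$. Both of these are path edges, hence open, so every stage costs $1$ and $\gamma(P_0,\sigma,\tau) \equiv 1$ against every $\sigma$. Thus Player 2 guarantees $1$, and $v_p = 1$.

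\textbf{Subcritical case.} Here I would have Player 1 play the strategy $\widehat T$ (always $T$). After $n$ stages the token has traversed an oriented upward path of length $n$, so $\frac1n\sum_{m=1}^n c(e_m) \le M_n/n$, where $M_n$ is the maximal number of open edges over all oriented length-$n$ paths from $z$, a directed last-passage value with Bernoulli weights. By Kingman's theorem $M_n/n \to \mu(p)$ almost surely for a deterministic constant $\mu(p) \le 1$, and it remains to prove the quantitative ``density of zeros'' statement $\mu(p) < 1$ for $p < p_c$. The subtlety is that the naive first moment $\mathbb E[\#\{\text{fully open oriented paths of length } r\}] = (2p)^r$ is only useful for $p < 1/2$, so one must organize the estimate around open clusters rather than around paths: a path using fewer than $\delta n$ closed edges decomposes into open cluster-segments separated by single closed edges, and a first-moment count in which each segment contributes the one-arm connectivity $\P_p(\exists\text{ open oriented path from } u \text{ reaching height } h) \le C e^{-ch}$ (exponential decay of subcritical oriented percolation, Theorem \ref{t.subcritical}), rather than a count of open paths, yields $\P(M_n \ge (1-\delta)n) \to 0$ for $\delta$ small enough. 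This gives $\gamma(z, \widehat T, \tau) \le \mu(p) < 1$ for every $\tau$, hence $v_p \le \mu(p) < 1$. The point demanding care is justifying the factorization of the per-segment connectivities along a sequential cluster exploration.

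\textbf{Critical case.} This is the delicate regime and the main obstacle, since no infinite open oriented path exists. The idea is to let Player 2 ride a hierarchy of finite open vertical crossings of increasing height. Fix scales $n_1 \ll n_2 \ll \cdots$; by Corollary \ref{c.vertical_connections_box} the probability that a box of dimensions $2 n_k^\sigma \times 2 n_k$ admits an open vertical crossing tends to $1$ stretched-exponentially fast, so Borel--Cantelli guarantees that almost surely, at every large scale, such crossings are present densely throughout space. Whenever the token sits on one of these crossings, Player 2 keeps it there exactly as in the supercritical case (up for $T$, down for $B$), collecting cost $1$; each crossing has height $\sim n_k$ and so can be exploited for $\sim n_k$ stages. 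The heart of the construction, and the hard part, is to specify how Player 2 transfers the token from an exhausted crossing to a longer crossing at the next scale while bounding the number of ``wasted'' stages \emph{uniformly over Player 1's choices of $T/B$ during the transfer}. Because the crossing boxes have sublinear width $n^\sigma$ with $\sigma < 1$, a nearby crossing of height $n_{k+1}$ lies within horizontal distance $O(n_{k+1}^\sigma) = o(n_{k+1})$, so the transfer can be arranged to cost $o(n_{k+1})$ stages; consequently the fraction of stages spent off the long crossings is negligible and $\limsup_n \frac1n \sum_m c(e_m) = 1$. Making this navigation robust against an adversary who controls the vertical direction is precisely where the estimates of Theorem \ref{t.connectivity properties} and Corollary \ref{c.vertical_connections_box} are needed, and is the principal technical difficulty of the whole theorem.
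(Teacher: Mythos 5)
Your three-case architecture (supercritical: ride a bi-infinite open vertical path; critical: ride a hierarchy of finite crossings with sublinear transfer costs; subcritical: Player 1 plays $\widehat T$ and the resulting oriented path has a positive density of closed edges) is exactly the paper's, and the supercritical and critical cases match the paper's argument in substance --- in particular your observation that Player 2 alone controls the horizontal coordinate, so a transfer to a crossing at horizontal distance $O(n_{k+1}^{\sigma})$ costs $O(n_{k+1}^{\sigma})=o(n_{k+1})$ stages regardless of Player 1's $T/B$ choices, is precisely how the paper resolves what you call the ``principal technical difficulty''. (The paper simplifies your ``dense throughout space'' requirement by using only nested boxes centred at the origin and the leftmost crossing of each.)

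The genuine gap is in the subcritical case. You correctly identify that the naive path-counting first moment fails for $p\geq 1/2$, propose a cluster-segment decomposition with one-arm connectivity factors, and then explicitly defer ``the factorization of the per-segment connectivities along a sequential cluster exploration''. That deferred step is the whole content of the claim: the one-arm bound $\P_p(u\to\text{height }h)\leq Ce^{-ch}$ does not specify where the open segment ends, so the starting point of the next segment is configuration-dependent and the events you want to multiply are not indexed by deterministic data; you would need to either replace one-arm probabilities by point-to-point connectivities summed over endpoints, or coarse-grain. The paper takes the second route (Claim \ref{c.last_passage}): it tessellates $\m Z^2$ into boxes of side $n$ with $\P_p(\vcross_{n,n})<(300n)^{-1}$, notes that the event $A_k$ (no open vertical crossing between consecutive box levels) forces at least one closed edge of the path inside $B_k$, and performs the union bound over \emph{box-paths} rather than edge-paths, so the entropy is only $3^k 2^k$ against $(3n\P_p(\vcross_{n,n}))^{k/2}$, giving a summable $2^{-k}$ and Borel--Cantelli. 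You should either carry out this renormalization or fully justify your chaining; as written the subcritical implication is not proved. (Also, Kingman's theorem is not needed: a quantitative almost-sure upper bound $\sup_P\limsup_n \frac1n\sum_m c(P_m)\leq 1-\delta$ over all semi-infinite vertical paths suffices, which is what the paper establishes directly.)
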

\begin{proof}
We will consider three cases. The first one is included in the second but is presented separately for clarity.
\subsection*{Case 1: $p>p_c$}
Consider an infinite vertical path $P$, which exists almost surely by Theorem \ref{t.subcritical}. Let us prove that if $z$ is in $P$, then $v(z)=1$. By Theorem \ref{theo:det}, $v$ does not depend on the initial position almost surely, hence this is enough to prove that $v=1$. Consider the following strategy for Player 2: if the current position $z$ is in $P$, and Player 1 plays some action, Player 2 should play in such a way that the next position of the token remains in $P$. This strategy ensures that the token always remains in $P$, hence it guarantees a total cost 1: $v=1$ (see Figure \ref{f.infinite_strategy}).
\begin{figure}[h!]
    \centering
    \includegraphics[width=0.3\linewidth]{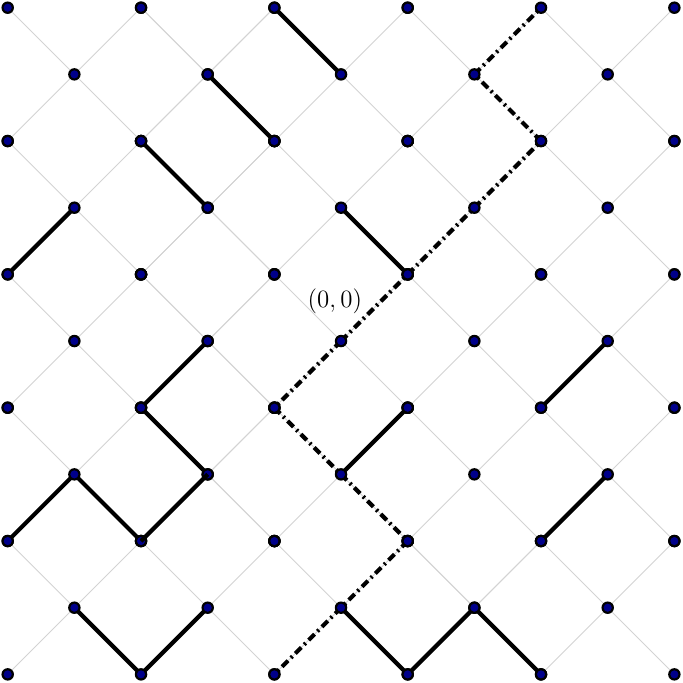}
    \caption{The figure  shows the graph $\LL$ with its costs. The black edges have a cost of $1$, the gray ones have a cost of $0$, and the dashed line is a crossing path from top to bottom. The game starts at $(0,0)$, which already belongs to a vertical crossing. Note that Player 2 always has a strategy to remain in the crossing. For example, in the first move, if Player 1 plays $T$, Player 2 chooses $R$, and if Player 1 plays $B$, Player 2 chooses $L$.}
    \label{f.infinite_strategy}
\end{figure}
\subsection*{Case 2: $p=p_c$}
Let $n \geq 1$. By Borel-Cantelli and Corollary \ref{c.vertical_connections_box}, there exists $\sigma<1$, almost surely, there exists $n_0 \geq 1$ such that for all $n \geq n_0$, the event $\vcross_{2^{n\sigma},2^n}$ happens. Define $P^n$ as the left-most vertical path that crosses $(-2^{n\sigma},2^{n\sigma}]\times (-2^{n},2^{n}]$, noting that $P^n$ is at distance at most $2^{-n\sigma}$ from the origin. Since $v$ is constant, we can assume w.l.o.g. that the initial position lies in the middle of $P^{n_0}$. Define $m_0:=1$ and for each $k \geq 1$:
$$
m_k:=1+\sum_{\ell=n_0}^{n_0+k-1} 2^{\ell-1}.
$$
We now build recursively a strategy for Player 2 (see Figure \ref{f.Crossings}) that satisfies the following properties, regardless of Player 1'strategy and for each $k \geq 0$:
\begin{enumerate}
\item
Between stages $m_k$ and $m_{k+1}-1$, the token spends at most $2^{(n_0+k) \sigma+1}$ steps on edges with cost 0. 
\item 
At stage $m_{k+1}$, the position lies in $P^{n_0+k}$. 
\end{enumerate}
The first property readily implies that such a strategy guarantees a total cost of 1, while the second property is useful for the induction step. 

\paragraph{Initial step: $k=0$.}
Starting from $n_0$, Player 2 follows the same strategy as in Figure \ref{f.infinite_strategy} to remain in $P^{n_0}$ until stage $m_1-1=2^{n_0-1}$. By the definition of $P^{n_0}$, both properties 1. and 2. are satisfied. 
\paragraph{Induction step $k \geq 1$.}
If the token is at $P^{n_0+k}$ at stage $m_{k}$, Player $2$ follows the same strategy as in Figure \ref{f.infinite_strategy}, ensuring the token remains in $P^{n_0+k}$ until stage $m_{k+1}$. If not, Player 2 chooses the direction that brings the token closer to $P^{n_0+k}$, reaching it in at most $2^{(n_0+k-1)\sigma}+2^{(n_0+k)\sigma} \leq 2^{(n_0+k)\sigma+1}$ steps, that is, the time to reach the vertical axis and then to go to $P^{n_0+k}$ from the vertical axis.  Once the token reaches $P^{n_0+k}$, Player $2$ follows the strategy in Figure \ref{f.infinite_strategy} to keep the token in $P^{n_0+k}$ until stage $m_{k+1}$. Thus, Property 2 is satisfied, and Property 1 holds since the token reaches $P^{n_0+k}$ before stage $m_k+2^{(n_0+k) \sigma+1}$ and then stays in $P^{n_0+k}$, where the cost is 1. 
\\

Therefore, we have constructed a strategy for Player 2 such that, for any strategy of Player 1, in each period $\left\{m_k,\dots,m_{k+1}-1\right\}$, the token spends at most $2(m_{k+1}-m_k)^{\sigma}$ stages on edges with cost 0. Since $\sigma<1$, such a strategy guarantees a total cost 1 to Player 2, so $v=1$. 
\begin{figure}[h!]
    \centering
    \includegraphics[width=0.2\linewidth]{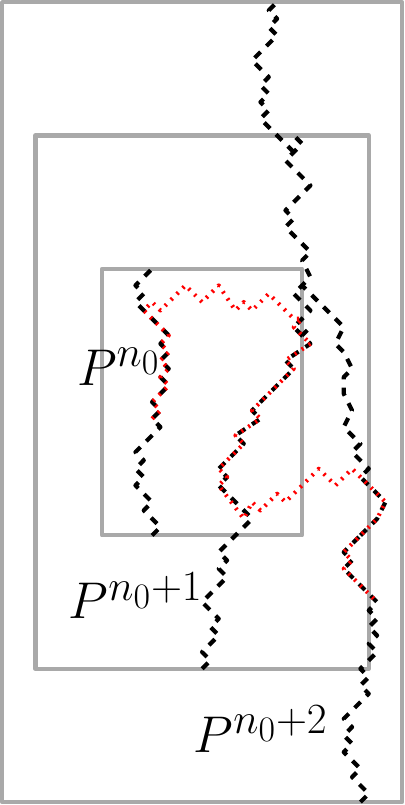}
    \caption{The gray boxes represent the boxes $(-2^{n\sigma},2^{n\sigma}]\times (-2^n,2^n]$. The black paths are crossings, and the red path represents the path followed by the token under the strategy constructed for Player 2. Note that the red path depends on Player 1's strategy.}
    \label{f.Crossings}
\end{figure}

\subsection*{Case 3: $p<p_c$}
Consider the strategy of Player 1 that plays $T$ at every stage. Regardless of Player 2's strategy, the path followed by the token is a semi-infinite vertical path. To conclude, we only need to verify that any such path $P$ has a total cost smaller than $1$. This is done in the following claim.
\begin{claim}\label{c.last_passage}
There exists $\delta >0$ such that
    \begin{align*}
    \sup_{P} \limsup_{n\to \infty}\frac{1 }{n }\sum_{m=1}^n c(P_m) <1-\delta,
    \end{align*}
    where the supremum is taken over all semi-infinite vertical paths starting at $0$, and $P_m$ designates the $m$-th vertex of $P$. 
\end{claim}
\begin{proof}
   Take $n$ large so that $\P_p(\vcross_{n,n})<(300n)^{-1}$ (which is possible thanks to Theorem \ref{t.subcritical}). Now, tesselate $\mathbb Z^2$ with squares of side length $n$, with the first one centered at $0$. For any semi-infinite path $P$ starting at $0$ and $k \geq 0$, denote by $B_k$ the first box with height $2k n$ that is reached by $P$. We refer to such a collection $(B_k)$ as a box-path. The path enters each box $B_k$ at time $m_k=n + 2(k-1)n$ (see Figure \ref{f.Box_projection}).  

Let $A_k$ be the event $$A_k:=\{\text{ There is no open vertical path from the bottom of $B_k$ to the bottom of $ B_{k+1}$} \}.$$ 
   If $A_k$ occurs, then the intersection of $P$ with box $B_k$ must contain at least one edge with cost 0. This implies
   \begin{align*}
\sum_{m=m_k}^{m_k +n-1}(1-c(P_m))\geq  \1_{A_k}.
   \end{align*}
   Moreover, for any $k$,
   \begin{align}\label{e.ineq_cost}
   \nonumber&\P\left (\sup_P \frac{1 }{m_k }\sum_{m=1}^{m_k} c(P_m) > 1- \frac{1}{4n }\right ) \\
   \nonumber&\leq \sum_{\substack{(B_i)_{i=1}^k\\ B \text{ is a box path}}} \P(\text{At least $k/2$, $j\in \{1,.. k \}$ are s.t. $A_j^c$ occurs for $B$})\\
   &\leq 3^{k} \times 2^{k} \times (3n\P_p(\vcross_{n,n}))^{k/2}< 2^{-k}.
   \end{align}
   Note that the second inequality in the above equation was derived as follows: the term $3^k$ arises because any vertical path has $3$ choices for the next square, the factor $2^{k}$ bounds the number of pairs that can be taken, and the probability that $A_j^c$ occurs is upper-bounded using the union bound by $3n\P_p(\vcross_{n,n})$. We conclude the claim from \eqref{e.ineq_cost} using the Borel-Cantelli lemma. 
   \begin{figure}
       \centering
       \includegraphics[width=0.27\linewidth]{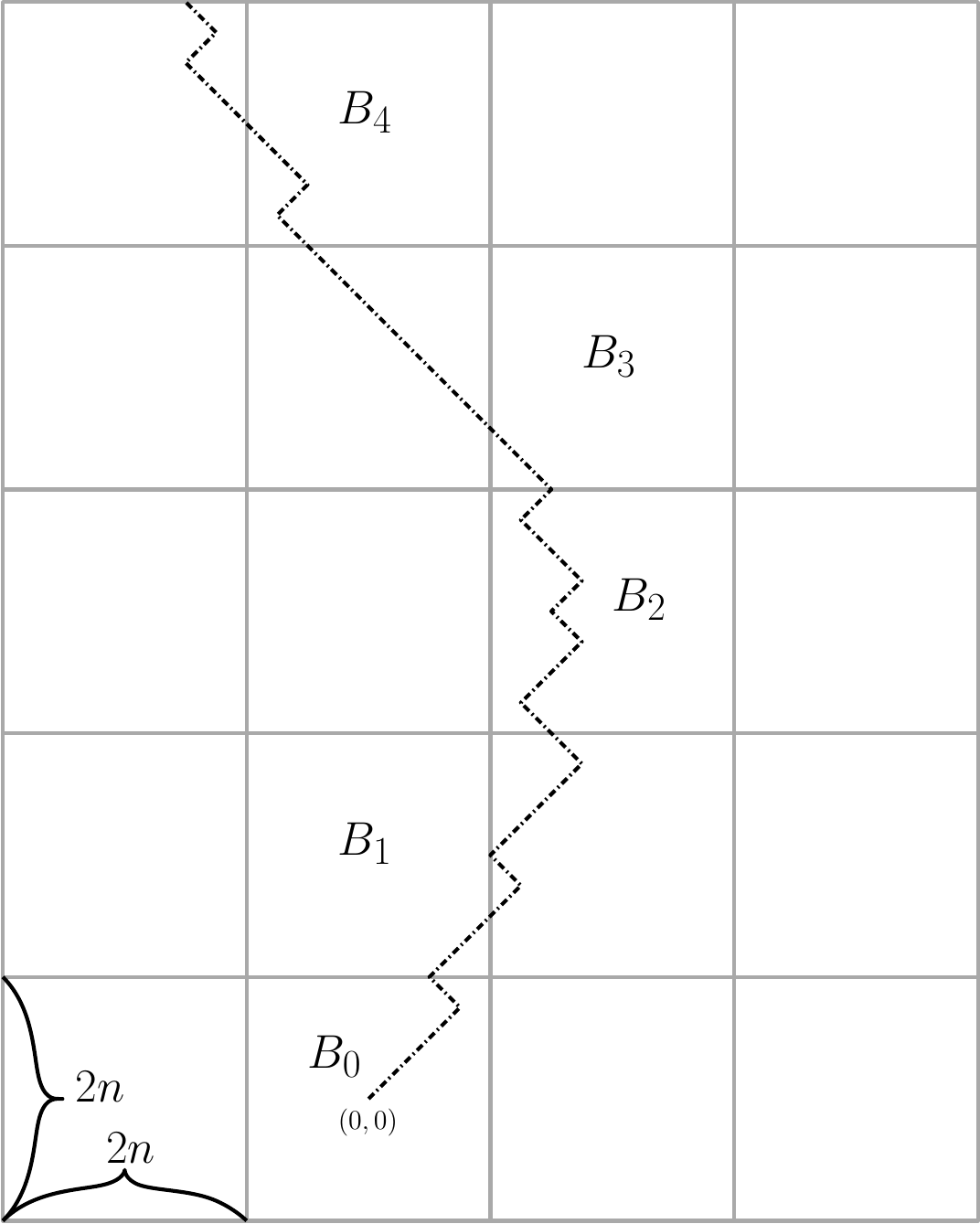}
       \caption{A representation of a semi-infinite path with its projection in the boxes. Note that the path starts in the middle of $B_0$.}
       \label{f.Box_projection}
   \end{figure}
\end{proof}
\end{proof}

To conclude this section, let us remark that Claim \ref{c.last_passage} is analogous to a shape result obtained in first passage percolation (see, for example, Section 2.3 of \cite{FPP}). However, we could not find a reference for this result in the context of last passage percolation. As the proof here is simpler than in the first passage percolation case, we chose to include it.

\section{Continuity of the value at $p_c$}\label{s.continuity}
In this subsection, we study the continuity of the value function at $p_c$.
\begin{theorem} \label{theo:continuity}
The function $p\mapsto v_p$ is continuous at $p_c$. 
\end{theorem}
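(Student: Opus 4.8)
The plan is to reduce the statement to a one-sided limit and then to import, at a fixed finite scale, the strategy used in the critical case of Theorem~\ref{theo:value_threshold}. By Theorem~\ref{theo:value_threshold} we have $v_{p_c}=1$ and $v_p=1$ for every $p>p_c$, so $p\mapsto v_p$ is automatically right-continuous at $p_c$ and the only content is the left limit. A standard monotone coupling of the environments (open more edges as $p$ increases) shows that $p\mapsto v_p$ is non-decreasing: any strategy of the maximizing Player~2 that guarantees $w$ at parameter $p$ guarantees at least $w$ at any $p'\ge p$, since passing to a pointwise larger configuration only raises each edge cost. Consequently it suffices to prove that for every $\varepsilon>0$ there exists some $p<p_c$ with $v_p\ge 1-\varepsilon$; monotonicity then forces $\lim_{q\uparrow p_c}v_q=1=v_{p_c}$.

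First I would fix the scale. Given $\varepsilon>0$, choose $\delta>0$ small (its size to be fixed at the end in terms of $\varepsilon$) and, using Corollary~\ref{c.vertical_connections_box} together with Corollary~\ref{c.p_almost_pc}, pick $n$ large and then $p<p_c$ so that a tall, thin rectangle of dimensions $n^\sigma\times n$ (with $\sigma<1$) is crossed vertically by an open path with probability at least $1-\delta$. Tiling $\LL$ by disjoint translates of such rectangles, independence makes each tile \emph{good}---that is, it contains a vertical open crossing---with probability at least $1-\delta$.

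Player~2's strategy mirrors the one from the case $p=p_c$ (Figure~\ref{f.infinite_strategy}). Whenever the token sits on the crossing $P_Q$ of the good box $Q$ it currently occupies, Player~2 answers each vertical choice $T$ or $B$ of Player~1 by the horizontal move that keeps the token on $P_Q$; since a vertical crossing is monotone, both its up- and down-neighbour at an interior vertex are open, so every such step has cost $1$. When Player~1 drives the token out of $Q$ into a neighbouring box $Q'$, Player~2 drops $P_Q$ and plays a constant horizontal action to \emph{plow} the token across $Q'$. Because $P_{Q'}$ separates the thin box from left to right, a horizontal sweep of length at most $2n^\sigma$ must meet it, after which Player~2 resumes following the crossing. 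The geometric point, exactly as in the critical case, is that the box is far taller than wide: during an $O(n^\sigma)$-step sweep Player~1 can alter the height by only $O(n^\sigma)\ll n$, so the token cannot leave $Q'$ vertically before reaching $P_{Q'}$.

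For the accounting, each traversal of a good box contributes $\Theta(n)$ steps of cost $1$, each transition contributes only $O(n^\sigma)$ steps, and bad boxes, met with frequency $\approx\delta$, contribute $O(n)$ steps of cost $0$ each; the resulting density of cost-$0$ stages is $O(n^{\sigma-1}+\delta)$, which is below $\varepsilon$ once $n$ is large and $\delta$ small. The hard part will be turning this heuristic into a guarantee valid against an adversarial Player~1 who controls the vertical direction. Player~1 may try to oscillate the token across a single box boundary to trigger cheap cost-$0$ edges repeatedly, or to confine it to an atypical region rich in bad boxes. I expect the first issue to be handled by the separation property: along one boundary there are only $O(n^\sigma)$ edges inside the strip swept by Player~2, so each cost-$0$ run is forced onto a crossing after $O(n^\sigma)$ stages and can be amortized against the $\Theta(n)$ cost-$1$ run that brackets it. The second issue should follow from a renormalization and Borel--Cantelli argument showing that, for $\delta$ small, whatever vertical route Player~1 chooses the empirical fraction of bad boxes visited remains $\lesssim\delta$, with the crossing estimates of Theorem~\ref{t.connectivity properties} and \cite{DTT18} controlling the per-box probabilities. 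This amortized, renormalized control of the off-crossing time is the crux of the argument.
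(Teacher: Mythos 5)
Your overall architecture matches the paper's: reduce to the left limit, fix a large scale $n$ and a near-critical $p<p_c$ via Corollaries \ref{c.vertical_connections_box} and \ref{c.p_almost_pc} so that $n^\sigma\times n$ boxes are crossed with probability $\geq 1-\delta$, and have Player~2 ride crossings of good boxes while paying $O(n^{\sigma})$ transition steps. However, the two points you yourself flag as ``the crux'' are exactly where the proof lives, and as written both have genuine gaps. First, the boundary-oscillation problem is not handled by your disjoint tiling: when the token exits a good box $Q$ vertically it lands on the boundary of the next box $Q'$, and Player~1 can immediately reverse direction, so the visit to $Q'$ yields $O(1)$ cost-$1$ steps and there is no ``$\Theta(n)$ cost-$1$ run'' to amortize the cost-$0$ runs against. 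The paper resolves this with an \emph{overlapping} grid of boxes centered at $(m\mathbb Z)\times(n\mathbb Z)$ (each point lies in two boxes) together with the re-centering rule: after each exit, Player~2 adopts the box in which the token sits at vertical distance $\geq n/2$ from the top and bottom, which forces every good box visited to contribute at least $n/2-2n^{\sigma}$ cost-$1$ steps before the next exit. Some device of this kind is needed; without it the amortization you describe does not go through.

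Second, your accounting assumes the empirical fraction of bad boxes along Player~1's adversarially chosen route is $\lesssim\delta$ and that each bad box costs $O(n)$ zero-cost steps. Neither half survives scrutiny. Because Player~1 steers the token, one must union-bound over all possible box-paths, of which there are exponentially many (the paper bounds them by $5^{N-1}$); the resulting estimate $\P(\tilde N\leq N/2)\leq \delta^{N/2}5^{N-1}$ only guarantees (for $\delta<1/25$, via Borel--Cantelli) that at least \emph{half} the boxes visited are good --- a fraction $O(\delta)$ of bad boxes is not obtainable from this entropy-versus-probability tradeoff. With only a constant fraction of good boxes, your assumption that bad boxes contribute $O(n)$ cost-$0$ steps each would leave a cost-$0$ density bounded below by a constant, and the bound would not close. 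The fix is the one the paper uses: in a bad box Player~2 plays a constant horizontal action and exits in at most $2n^{\sigma}$ steps, so that the total cost-$0$ time is $O(Nn^{\sigma})$ against $\Omega(Nn)$ cost-$1$ time, giving density $O(n^{\sigma-1})$ independently of $\delta$. Since you defer precisely this counting argument to ``a renormalization and Borel--Cantelli argument'' without carrying it out, the proposal is an accurate outline of the paper's strategy but not yet a proof.
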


Before proving the theorem, note that since $v_p=1$ for any $p\geq p_c$, it suffices to prove that $\liminf_{p\nearrow p_c} v_p=1$.

\begin{proof}
    The proof relies crucially on Corollary \ref{c.p_almost_pc}.  We begin by choosing $0<\delta <1/25$ and $\sigma<1$ such that for all $n\geq n_0(\delta)$, there exists $p_0<p_c$ such that for any $p>p_0$:
    \begin{align*}
    \P_{p}(\vcross_{m,n})>1-\delta,
    \end{align*}
    where $m=\lfloor n^\sigma \rfloor$. Throughout this proof, we work exclusively with boxes of the form $x + (-m,m]\times(-n,n]$ with $x\in (m\mathbb Z)\times(n\mathbb Z)$. We say a point is ``good'' if the event $\vcross_{m,n}(x)$ occurs \footnote{Note that the random variables $(\vcross_{m,n}(x))_{x\in \in (m\mathbb Z)\times(n\mathbb Z)}$ are not independent as each rectangle intersects two others, thereby sharing some of the same random variables. However, they are independent as long as they are not vertical neighbours, i.e., as long as they do not intersect.} (see Figure \ref{f.NewGrid}). 

    \begin{figure}[h!]\centering
\includegraphics[width=0.14\textwidth]{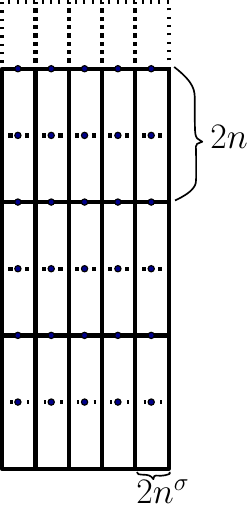}
\caption{This figure shows the new grid that emerges when we examine vertical boxes. The dots represent the center of the boxes in $(m\mathbb Z)\times (n\mathbb Z)$, while the solid lines or dotted lines represent the possible boxes. Note that any given point belongs to only two boxes, and boxes that are not on the same vertical slit do not intersect.}
\label{f.NewGrid}
\end{figure}
    
    We now establish a recursive strategy for Player 2 that guarantees a total cost close to 1. We start at $x=0$ and depending on whether $x$ is good or not, Player 2 will follow a different strategy (See Figure \ref{f.PStrategie} for a possible result of this strategy):
    \begin{itemize}
        \item If $x$ is good, Player 2 will choose a constant direction that moves the token closer to the nearest vertical crossing of $x + (-m,m]\times(-n,n]$. When the token reaches the crossing, Player 2 will take the necessary actions to remain on the crossing until the token exits the box, either through the top or the bottom (this can be achieved using the strategy explained in Figure \ref{f.infinite_strategy}). Note that in this case, the token spends at least $n/2-2n^\sigma$ steps on edges with a cost of $1$, and at most $2n^\sigma$ steps on edges with a cost of $0$.
        \item If $x$ is bad, Player 2 will move the token in the right direction. In this case, the token will spend at most $2n^\sigma$ steps on edges with a cost of $0$.
    \end{itemize}
    After any of these steps, the token exits the box $x + (-m,m]\times(-n,n]$. This means the token will now be in two different boxes. At least one of these boxes will have a distance from the vertical limits greater than or equal to $n/2$, we call $x$ the center of this box and repeat the procedure described above.
    \begin{figure}[h!]\centering
\includegraphics[width=0.25\textwidth]{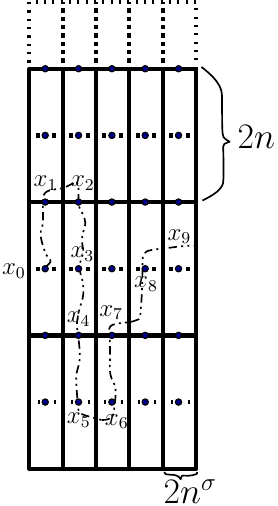}
\caption{This figure illustrates a possible outcome of the strategy described. The point $x_0=(0,0)$ is where the game started; since it was a good box, the token exited through the top. After the exit, the token belongs to two different boxes: the one centered at $x_1=(0, n)$ and the one centered at $x_2=(0,2n)$. Player 2 chooses $x=x_1$ and realizes it is a bad box, so the token exits to the right. The process continues until the token exits through the bad box centered at $x_9$. The points $x_0$, $x_2$, $x_3$, $x_4$, $x_6$ and $x_8$ are good, while $x_1$, $x_5$ and $x_9$ are bad. }
\label{f.PStrategie}
\end{figure}

    In this strategy, the token never revisits bad boxes represented by the same $x\in  (m\mathbb Z)\times(n\mathbb Z)$. Assume that it has passed through $N$ different boxes $(x_i)_{i=1}^N$; the cost is then lower bounded by
    \begin{align*}
    1-\frac{N 2n^{\sigma} }{\sum_{i=1}^{N} \mathrm{time}(x_N) } \geq 1-\frac{2Nn^{\sigma} }{(\frac{n }{2 }-2n^\sigma)\sum_{i=1}^N \1_{x_i \text{ is good}} },
    \end{align*}
    where time($x_N)$ denotes the number of turns spent between exiting $x_{N-1}$ and exiting $x_{N}$. We now define the random variable $\Tilde N = \sum_{i=1}^N \1_{x_i \text{ is good}}$ and study
    \begin{align*}
    \P( \Tilde N \leq N/2) \geq \delta^{N/2}\sharp\{(x_{i})_{i=1}^N: \text{ acceptable path with at least $N/2$ right turns} \} \leq \delta^{N/2} 5^{N-1}.
    \end{align*}
    Here, an acceptable path is one in $(-m,m]\times[-n,n]$ where each step moves either up, down, right, up-right, or down-right. This path represents a possible path that the players took in the game using the strategy described for Player 2. Note that each time there is a right, up-right or down-right step, it must be from a bad box, and the token never returns to that box again.
    
    Since $\delta < 1/25$, and by using Borel-Cantelli, we see that eventually $\tilde N > N/2$, leading to the following bound: 
    \begin{align}\label{e.bound_limit}
    \limsup_{n' \rightarrow +\infty} \frac{\sum_{k=1}^{n'}c(e_k) }{n'}\geq 1- 4n^{\sigma-1} \liminf_{N \rightarrow+\infty} \frac{N }{\tilde N } \geq 1-2n^{\sigma-1}.
    \end{align}

    We conclude by noting that \eqref{e.bound_limit} implies $\liminf_{p\nearrow p_c} v_p(0)\geq 1-n^{\sigma-1}$ for any $n\in \mathbb N$.
    
\end{proof}

\section{Necessary condition for $v=0$} \label{sec:v0}
Consider a square composed of four neighbouring edges. The square is called a \textit{0-square} if all four edges are 0. A \textit{0-path} is an infinite horizontal path composed entirely of 0-squares.

Let $p_0\in [0,1]$ be the critical point corresponding to the following event: there exists a 0-path. 
The following theorem states that below $p_0$, we have $v_p=0$.
\begin{theorem} \label{t.v=0}
For all $p<p_0$, $v_p=0$. 
\end{theorem}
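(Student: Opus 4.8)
The plan is to show that, for $p<p_0$, Player 1 can force the token to travel forever along edges of cost $0$, so that the total cost is identically $0$; since $v_p\geq 0$ trivially, this gives $v_p=0$. I first reduce to producing a single good starting vertex. By Theorem \ref{theo:det}, almost surely $v(z)$ is independent of $z$ and equals the deterministic constant $v_p$. Hence it is enough to exhibit, on an almost sure event, one (configuration-dependent) vertex $z^\star$ and a strategy $\sigma\in\Sigma$ of Player 1 with $\gamma(z^\star,\sigma,\tau)=0$ for every $\tau\in\mathcal T$: this forces $v_p=v(z^\star)=0$. The vertex $z^\star$ will be chosen on a 0-path.

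Next I produce the trapping structure. A 0-path is, by definition, an infinite horizontal path of 0-squares, which in the convention of Section \ref{s.preliminaries} means a bi-infinite one. The event that a 0-path exists is invariant under the translations of $\LL$, and the i.i.d.\ environment is ergodic under these translations (as already invoked in the proof of Theorem \ref{theo:det}), so this event has probability $0$ or $1$. Moreover, by a monotone coupling---decreasing $p$ only closes edges, which can only create new 0-squares and can only help 0-paths to exist---its probability is non-increasing in $p$. Consequently it equals $1$ for every $p<p_0$: almost surely there is a bi-infinite horizontal 0-path, which I fix and call $\mathcal P$.

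I then describe Player 1's strategy. In $\LL$ Player 1 chooses the vertical direction ($T$ or $B$) and Player 2 the horizontal one ($L$ or $R$), so once Player 1 commits to a direction the token moves along one of the two corresponding edges, the choice being Player 2's. Call $v$ a \emph{corner of $\mathcal P$} if it is a corner of some 0-square of $\mathcal P$. The two elementary facts are: (i) if $v$ is the top (resp.\ bottom) corner of a 0-square $S\in\mathcal P$, then after Player 1 plays $B$ (resp.\ $T$) both available edges are the two edges of $S$ incident to $v$, hence cost $0$, and the token lands on the left or right corner of $S$; and (ii) if $v$ is the left (resp.\ right) corner of $S\in\mathcal P$, then, $\mathcal P$ being bi-infinite and horizontal, $S$ has a 0-square neighbour of $\mathcal P$ immediately to its left (resp.\ right), and a short case check over the admissible relative positions of these two squares (horizontally adjacent, or diagonally adjacent above or below) shows that Player 1 has a vertical action for which both available edges lie in the union of these two 0-squares, hence cost $0$. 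Combining (i) and (ii), from every corner of $\mathcal P$ Player 1 has a vertical action such that, whatever Player 2 answers, the token crosses a cost-$0$ edge of $\mathcal P$ and reaches another corner of $\mathcal P$. Playing this way from $z^\star$ equal to any corner of $\mathcal P$ keeps the token among the corners of $\mathcal P$ forever, so every crossed edge has cost $0$ and $\gamma(z^\star,\sigma,\tau)=0$ for all $\tau$, giving $v_p=0$.

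The main obstacle is the verification of (ii): one must confirm that the ``thickness'' of $\mathcal P$---each link being a whole square of four cost-$0$ edges rather than a single edge---lets Player 1 cover both of Player 2's horizontal responses at once, and that bi-infiniteness leaves no end of $\mathcal P$ from which Player 2 could push the token onto a possibly open edge. Concretely, the crux is that every 0-square of a bi-infinite horizontal 0-path has a 0-square neighbour on each side, which is exactly what makes all four of its corners safe; establishing this amounts to the short but careful enumeration of how two consecutive 0-squares of $\mathcal P$ can overlap, together with pinning down the natural meaning of ``horizontal'' (first-coordinate-monotone) so that left and right neighbours always exist.
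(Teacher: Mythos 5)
Your proof is correct and follows essentially the same route as the paper's (one-line) argument: trap the token in the 0-path, whose almost sure existence for $p<p_0$ is supplied by the proposition following the theorem. Your version is in fact more careful on the two points the paper glosses over: you correctly attribute the trapping strategy to Player 1 (the paper's proof and the caption of Figure~\ref{f.0path_strategy} say ``Player 2'', a slip, since Player 2 maximizes the cost), and you invoke Theorem~\ref{theo:det} to start the token on the path rather than arguing that Player 1 can reach it from an arbitrary vertex; the case check in your step (ii) is the genuine crux but is routine and does go through for each of the three admissible relative positions of consecutive 0-squares.
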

\begin{proof}
Player 2 can reach the 0-path in a finite number of stages and then keep the token in it, thereby guaranteeing a total cost of 0 (see Figure \ref{f.0path_strategy}).
\begin{figure}[h!]
    \centering
    \includegraphics[width=0.3\linewidth]{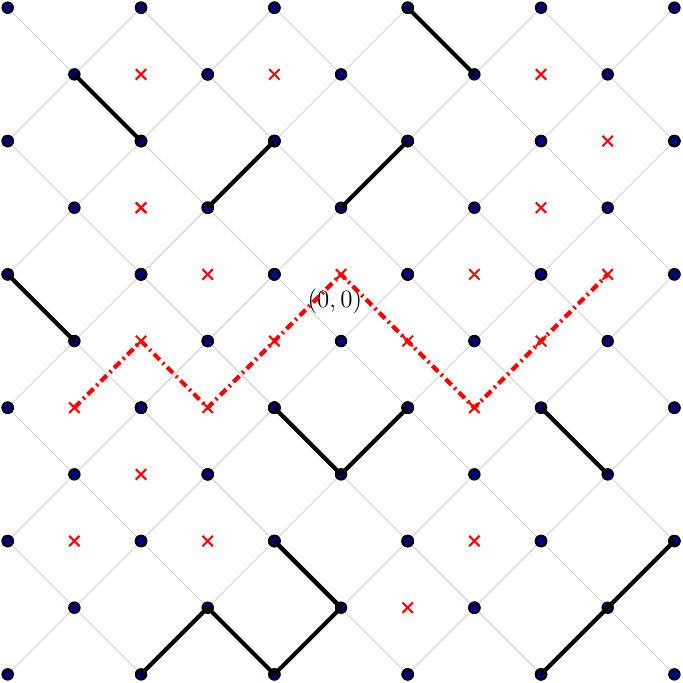}
    \caption{The figure illustrates the graph $\LL$ with its associated costs. Edges where the cost is 1 are shown in black, while those where the cost is 0 are shown in gray. Crosses mark the centers of $0$-squares. The game starts at $(0,0)$, which already belongs to an infinite horizontal crossing of $0$-squares. Note that Player 2 always has a strategy to remain on the crossing by choosing the direction that points toward the cross. }
    \label{f.0path_strategy}
\end{figure}
\end{proof}
We conclude this section by showing, using Peierls' argument, that $p_0$ is indeed non-trivial.
\begin{proposition}
    We have that $p_0\in (0,1)$, and for any $p<p_0$, $\P_p$-almost surely, there exists a $0$-path.
\end{proposition}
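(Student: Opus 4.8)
The plan is to introduce the 0-squares as the sites of an auxiliary percolation model and to run a Peierls (contour) argument twice, once for each bound. First I would reduce the proposition to the two inequalities $p_0>0$ and $p_0<1$. The event ``there exists a 0-path'' is invariant under the translations of $\LL$ preserving the parity of $x+y$, so by ergodicity of the i.i.d.\ environment it has $\P_p$-probability $0$ or $1$ for every fixed $p$. It is also a decreasing event in the open edges (closing an edge can only create 0-squares and can only help a 0-path exist), so by the standard monotone coupling $p\mapsto \P_p(\exists\ 0\text{-path})$ is non-increasing; hence there is a well-defined $p_0\in[0,1]$ with the event having probability $1$ for $p<p_0$ and $0$ for $p>p_0$. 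In particular the final assertion of the proposition (``for $p<p_0$, a 0-path exists $\P_p$-a.s.'') is exactly this $0$--$1$ dichotomy, and nothing more needs to be said for it once the threshold is located strictly inside $(0,1)$. For the auxiliary model I would declare each face of $\LL$ (a square of four edges, centered at a point with $x+y$ odd) to be \emph{open} when it is a 0-square, which happens with probability $q:=(1-p)^4$, and call two faces adjacent when they share an edge. Centers of adjacent faces differ by $(\pm1,\pm1)$, so this adjacency graph is isomorphic to $\m Z^2$, and since two faces sharing only a vertex have disjoint edge sets, the field of open-face indicators is $1$-dependent. A 0-path is then an infinite horizontal self-avoiding path in this model.

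To show $p_0<1$ I would bound the number of 0-paths by a direct count. The number of self-avoiding face-paths of length $N$ from a fixed face is at most $3^N$ (the face graph has degree $4$), and along any such path one can extract at least $N/3$ pairwise non-adjacent, hence edge-disjoint and therefore independent, faces; thus the probability that every face on a given path is a 0-square is at most $q^{N/3}=(1-p)^{4N/3}$. The expected number of length-$N$ 0-paths from a fixed face is therefore at most $\bigl(3\,(1-p)^{4/3}\bigr)^{N}$, which tends to $0$ as $N\to\infty$ once $p$ is close enough to $1$. By Borel--Cantelli there is $\P_p$-a.s.\ no infinite 0-path emanating from any fixed face, and a countable union over all faces gives $\P_p(\exists\ 0\text{-path})=0$, i.e.\ $p_0<1$.

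For $p_0>0$ I would run the dual Peierls argument. A face fails to be a 0-square with probability $1-q\le 4p$, which is small for small $p$. By planar duality, the absence of a horizontal 0-crossing of a long rectangle forces a top-to-bottom ``barrier'' of non-0-squares in the matching lattice (the one that adds diagonal adjacencies). A barrier of length $\ell$ has probability at most $\bigl(\kappa\,(4p)^{1/3}\bigr)^{\ell}$ by the same edge-disjoint extraction and a degree bound $\kappa$, so the total probability of a blocking barrier over a growing family of rectangles is summable for $p$ small. Consequently the model percolates horizontally for $p$ small, and by translation invariance the resulting infinite cluster is unbounded in $x$ in both directions, yielding a bi-infinite horizontal 0-path $\P_p$-a.s.; hence $p_0>0$.

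The main difficulty will be the dual step: one must set up planar duality correctly on the tilted face lattice, identifying the matching lattice whose connected barriers are exactly the obstructions to horizontal 0-crossings, and then upgrade ``horizontal crossings of all large rectangles'' into a genuine bi-infinite horizontal path. Throughout, the mild $1$-dependence must be kept under control; I would handle it either by always passing to edge-disjoint subfamilies before applying independence, or by invoking Liggett--Schonmann--Stacey stochastic domination to compare with an independent percolation of density close to $1$. By contrast, the Peierls counting estimates themselves are entirely routine.
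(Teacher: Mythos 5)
Your proposal is correct in substance and, for the heart of the matter, follows the same route as the paper: the bound $p_0>0$ is obtained in both cases by a Peierls/contour argument on the face lattice (0-squares as sites of a $1$-dependent site percolation, dual blocking barriers in the matching lattice with diagonal adjacencies, independence recovered by extracting an edge-disjoint subfamily of faces along the contour), and the final ``$\P_p$-a.s.\ for all $p<p_0$'' statement is obtained in both cases from monotonicity of the event in $p$ together with the translation-invariance $0$--$1$ law. Where you genuinely diverge is the bound $p_0<1$: you run a second, self-contained first-moment count (expected number of self-avoiding 0-face-paths of length $N$ is at most $C\,3^N q^{cN}\to 0$ for $p$ near $1$), whereas the paper dispatches this in one line by observing that the value is monotone in $p$ and equals $1$ for $p\geq p_c$, while the existence of a 0-path forces $v_p=0$; hence $p_0\leq p_c<1$. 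Your version is more elementary and independent of the game-theoretic results; the paper's is shorter and additionally yields the sharper information $p_0\leq p_c$. Two minor quantitative slips that do not affect the conclusion: a self-avoiding path in a degree-$4$ graph need not be an induced path, so the greedy extraction of pairwise non-adjacent faces only yields $N/5$ of them (the paper uses $l/5$), not $N/3$; and the number of self-avoiding face-paths of length $N$ from a fixed face is $4\cdot 3^{N-1}$ rather than $3^N$. Both constants are harmless since only the exponential rate matters.
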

\begin{proof}
    Clearly, $p_0<1$ because the value is monotone with respect to $p$, and thus $p_0<p_c<1$. To see that $p_0>0$, we need a Peierls-type argument. We identify squares with their centers ($(x,y)$ such that $x+y$ is odd), and note that the problem of the existence of paths is now a problem of oriented site percolation with $2$-dependence (i.e., random variables that are at a distance greater than $2$ are independent). 
    
    If there is no $0$-path starting from $(0,1)$, then there must be some $n\in \mathbb N^*$ such that there is a dual top-to-bottom crossing of the square $(0,1)+[-n,n]\times[n,n]$ by a non-self-intersecting path\footnote{In this context, a dual crossing is a path on faces where at each step, the intersection between the closures of two faces is not trivial, i.e., meaning this path may include ``diagonal" edges.}. Using the union bound, we see that this probability is upper bounded by
    \begin{align*}
    \sum_{n\in \mathbb N} \sum_{ \substack{P:\\ \text {starts at $[n,n]\times \{1-n\}$}\\ \text{ends at $[n,n]\times \{n+1\}$ }}}\P(\text{No square of $P$ is a $0$-square}) \leq \sum_{n\in \mathbb N} \sum_{l\geq 2n} n^2 5^l (1-(1-p)^4)^{l/5} \stackrel{p\to 0}{\to} 0.
    \end{align*}
   Here, the term $n^2 5^l$ bounds the amount of non-self-intersecting paths of length $l$, $(1-(1-p)^4)$ is the probability that a given box is not a $0$-square and that each self-avoiding path of length $l$ has at least $l/5$ boxes who do not share any edges. We conclude by taking $p>0$ small enough such that the above upper bound is smaller than $1/2$, in which case we have a positive probability of having a $0$-path. Since the event of having a $0$-path is translation-invariant, we conclude that the probability must be either $0$ or $1$.
\end{proof}

\begin{remark}
 Note that a $0$-path is not the only geometric construction that allows player 1 to have a strategy yielding only $0$s. A simpler (but unlikely) geometric structure is an infinite horizontal \textit{zig-zag path}, which is an infinite horizontal path of edges that alternates between going up and down.    
\end{remark}

\section{Perspectives} \label{sec:persp}
We present here a few open questions that we believe could be of interest to both probabilists and game theorists. 

\begin{itemize}
    \item 
    We proved that the function $p \rightarrow v(p)$ is continuous at $p_c$. The numerical simulation presented in Figure \ref{f.graph} suggest that $v$ is continuous everywhere. Is this true? 
    \item 
    By analogy with first-passage percolation and last-passage percolation, one can define the $n$-stage game where the total cost is the average $\frac{1}{n} \sum_{m=1}^n c(e_m)$ and consider its value $v_n$. This type of game is also well-studied in the game theory community. Is it true that $v_n$ converges almost surely, and does it converge to the value $v$ defined in this paper?. The main difficulty in this questions lies in the fact that one cannot a priori use subadditive arguments to show the existence of such a limit as in the case of first and last passage percolation.
    \item 
    A related question is whether defining the total cost as $\liminf_{n \rightarrow +\infty} \frac{1}{n} \sum_{m=1}^n c(e_m)$ instead of $\limsup_{n \rightarrow +\infty} \frac{1}{n} \sum_{m=1}^n c(e_m)$ changes the value of the game. 
    \item 
    We proved in Section \ref{sec:v0} that the existence of a 0-path is enough to guarantee that $v=0$. We also noted that this was not a necessary condition. Can we characterize the type of path such that $v=0$ if and only if such a path exists? 
\end{itemize}

\section*{Acknowledgments}
The authors are grateful to Luc Attia, Lyuben Lichev, Dieter Mitsche and Raimundo Saona, for valuable discussions. The game we consider in this paper is inspired by an example that we discussed with Raimundo Saona and Luc Attia, and which is detailed in Chapter 7 of Luc Attia's PhD thesis. 
We are grateful to Melissa Gonzalez Garcia for encoding our game in Python and making the numerical simulations that produced Figure \ref{f.graph}. The research of A.S.\,is supported by grants ANID AFB170001, FONDECYT regular 1240884 and ERC 101043450 Vortex, and was supported by FONDECYT iniciaci\'on de investigaci\'on N$^o$ 11200085. The research of B. Z. was supported by the French Agence Nationale de la Recherche (ANR) under reference ANR-21-CE40-0020 (CONVERGENCE project). Part of this work was completed during a 1-year visit of B.Z. to the Center for Mathematical Modeling (CMM) at University of Chile in 2023, under the IRL program of CNRS.  
\bibliographystyle{alpha}
\bibliography{bibliogen}
\end{document}